\newcommand{\EEQ}{\end{equation}}
\newcommand{\rfb}[1]{\mbox{\rm
   (\ref{#1})}\ifx\undefined\stillediting\else:\fbox{$#1$}\fi}
                         \newcommand{\ud}     {{\rm d}}
\newcommand{\bt}{\begin{Theorem}}
\newcommand{\et}{\end{Theorem}}
\newcommand{\br}{\begin{remark}}
\newcommand{\er}{\end{remark}}
\newcommand{\bc}{\begin{Corollary}}
\newcommand{\ec}{\end{Corollary}}
\newcommand{\el}{\end{Lemma}}
\newcommand{\bd}{\begin{definition}}
\newcommand{\ed}{\end{definition}}
\newcommand{\N}  {\mathbb{N}}
\newcommand{\R}  {\mathbb{R}}
\newcommand{\mm}    {{\hbox{\hskip 0.5pt}}}
\newcommand{\bluff} {{\hbox{\raise 15pt \hbox{\mm}}}}
\newfont{\Blackboard}{msbm10 scaled 1200}
\newfont{\roma}{cmr10 scaled 1200}
\def\CC{\rm \hbox{C\kern-.56em\raise.4ex
         \hbox{$\scriptscriptstyle |$}\kern+0.5 em }}
\newtheorem{corollary}{Corollary}[section]
\newtheorem{definition}[corollary]{Definition}
\newtheorem{remark}[corollary]{Remark}
\numberwithin{equation}{section}
\newtheorem{lem}{Lemma}[section]
\newtheorem{prop}{Proposition}[section]
\newtheorem{cor}{Corollary}[section]
\newtheorem{thm}{Theorem}[section]
\newtheorem{rem}{Remark}[section]
\def\ds{\displaystyle}
\newcommand{\re}{\mathrm{Re}}
\newcommand{\supp}{\mathrm{supp}}
\begin{document}
\thispagestyle{empty}
  
\title[Stability of a star-shaped network with local Kelvin-Voigt damping]{Stability of a star-shaped network with local Kelvin-Voigt damping and non-smooth coefficient at interface}

\author{Fathi Hassine}
\address{UR Analysis and Control of PDEs, UR 13ES64, Department of Mathematics, Faculty of Sciences of Monastir, University of Monastir, Tunisia}
\email{fathi.hassine@fsm.rnu.tn}

\begin{abstract}

In this paper, we study the stability problem of a star-shaped network of elastic strings with a local Kelvin-Voigt damping. Under the assumption that the damping coefficients have some singularities near the transmission point, we prove that the semigroup corresponding to the system is polynomially stable and the decay rates depends on the speed of the degeneracy. This result improves the decay rate of the semigroup associated to the system on an earlier result of Z.~Liu and Q.~Zhang in \cite{LZ} involving the wave equation with local Kelvin-Voigt damping and non-smooth coefficient at interface.
\end{abstract}    

\subjclass[2010]{35B35, 35B40, 93D20}
\keywords{Network of strings, Kelvin-Voigt damping, non-smooth coefficient}

\maketitle

\tableofcontents
 
\section{Introduction}
We consider one-dimensional wave propagation through $N+1$ edges (with $N\geq1$) consisting of an elastic and a Kelvin-Voigt medium all connected to one transmission point. The later material is a viscoelastic material having the properties both of elasticity and viscosity. More precisely we consider the following initial and boundary-value problem
\medskip
\begin{equation}\label{IWKVS1}
\left\{
\begin{array}{ll}
\ds\ddot{u}_{0}(x,t)-u_{0}''(x,t)=0&(x,t)\in(0,\ell_{0})\times(0,+\infty),
\\
\ds\ddot{u}_{j}(x,t)-\left[u_{j}'(x,t)+d_{j}(x)\dot{u}_{j}'(x,t)\right]'=0&(x,t)\in(0,\ell_{1})\times(0,+\infty),\,j=1,\ldots,N,
\\
u_{j}(\ell_{j},t)=0&t\in(0,+\infty),\,j=0,\ldots,N,
\\
u_{0}(0,t)=\dots=u_{N}(0,t)&t\in(0,+\infty),
\\
\ds u_{0}'(0,t)+\sum_{j=1}^{N}u_{j}'(0,t)+d_{j}(0)\dot{u}_{j}'(0,t)=0&t\in(0,+\infty),
\\
\ds u_{j}(x,0)=u_{j}^{0}(x),\;\dot{u}_{j}(x,0)=u_{j}^{1}(x)&x\in(0,\ell_{j}),\,j=0,\ldots,N,
\end{array}
\right.
\end{equation}
where the point stands for the time derivative and the prime stands for the space derivative, $u_{j}:[0,\ell_{j}]\times[0,+\infty[\longrightarrow\mathbb{R}$ for $j=0,\ldots,N$ are the displacement of the of the string of length $\ell_{j}$ and the coefficient damping $d_{j}$ is assumed to be a non-negative function.

The natural energy of system \eqref{IWKVS1} is given by
$$
E(t)=\frac{1}{2}\sum_{j=0}^{N}\int_{0}^{\ell_{j}}\left(|\dot{u}_{j}(x,t)|^{2}+|u_{j}'(x,t)|^{2} \right)\,\ud x
$$
and it is dissipated according to the following law
\begin{equation*}
\frac{\ud}{\ud t}E(t)=-\sum_{j=1}^{N}\int_{0}^{\ell_{j}}d_{j}(x)\,|\dot{u}_{j}'(x,t)|^{2}\,\ud x,\;\forall\,t>0.
\end{equation*}
The stability of this model was intensively studied in this last two decades:

On high dimensional case: Liu and Rao \cite{liu-rao2} proved the exponential decay of the energy providing that the damping region is a neighborhood of the whole boundary, and further restrictions are imposed on the damping coefficient. Next, this result was generalized and improved by Tebou \cite{tibou2} when damping is localized in a suitable open subset, of the domain under consideration, which satisfies the piecewise multipliers condition of Liu. He shows that the energy of this system decays polynomially when the damping coefficient is only bounded measurable, and it decays exponentially when the damping coefficient as well as its gradient are bounded measurable, and the damping coefficient further satisfies a structural condition. Recently, Using Carleman estimates Ammari et al. \cite{AHR2} show a logarithmic decay rate of the semigroup associated to the system when the damping coefficient is arbitrary localized. Next, Burq  generalized this result in \cite{burq2}. This author shows also a polynomial decay rate of the semigroup when the damping region verifying some geometric control condition  of order equal to $\ds\frac{1}{2}$ and of order equal to $\ds\frac{1}{4}$ for a cubic domain with eventual degeneracy on the coefficient damping in case of dimension $2$.

In case of the interval (or when $N=1$): It is well known that the Kelvin-Voigt damping is much stronger than the viscous damping in the sens that if the entire medium is of the Kelvin-Voigt type, the damping for the wave equation not only induces exponential energy decay but also the associated to semigroup is analytic \cite{huang2}; while the entire medium of the viscous type, the associate semigroup is only exponential stable \cite{CFNS} and does not have any smoothing property since the spectrum of the semigroup generator has a vertical asymptote on the left hand side of the imaginary axes of the complex plane. When the damping is localized (i.e., distributed only on the proper subset of the spatial domain), such a comparison is not valid anymore. While the local viscous damping still enjoys the exponential stability as long as the damped region contains an interval of any size in the domain, the local Kelvin–Voigt damping doesn't follow the same analogue. Chen et al. \cite{chen-liu-liu} proved lack of the exponential stability when the damping coefficient is a step function. This unexpected result reveals that the Kelvin–Voigt damping does not follow the ''geometric optics" condition (see \cite{blr}). And Liu and Rao \cite{liu-rao1} proved that the solution of this model actually decays at a rate of $t^{-2}$. The optimality of this order was proven by Alves et al. in \cite{ARSVG}. In 2002, it was shown in \cite{liu-liu2} that exponential energy decay still holds if the damping coefficient is smooth enough. Later, the smoothness condition was weakened in \cite{zhang} and satisfies the following condition
$$
a'(0)=0,\qquad \int_{0}^{x}\frac{|a'(s)|^{2}}{a(s)}\,\ud s\leq C|a'(x)|\quad \forall\,x\in[0,1].
$$
This indicates that the asymptotic behavior of the solution depends on the regularity of the damping coefficient function, which is not the case for the viscous damping model. Renardy \cite{renardy} in 2004 proved that the real part of the eigenvalues are not bounded below if the damping coefficient behaves like $x^{\alpha}$ with $\alpha>1$ near the interface $x=0$. Under this same condition Liu et al. \cite{liu-liu-zhang} proved that the solution of the system is eventually differentiable which also guarantees the exponential stability since there is no spectrum on the imaginary axis and the system is dissipative. In 2016, Under the assumption that the damping coefficient has a singularity at the interface of the damped and undamped regions and behaves like $x^{\alpha}$ with $\alpha\in(0,1)$ near the interface, \cite{LZ} proved that the semigroup corresponding to the system is polynomially stable and the decay rate depends on the parameter $\alpha$.

In case of multi-link structure: In \cite{hassine1} we proved that the semigroup is polynomially stable when the coefficient damping is piecewise function with an optimal decay rate equal to $2$ (we refer also to \cite{hassine2} for the case of transmission Euler-Bernoulli plate and wave equation with a localized Kelvin-Voigt damping). Recently, Ammari et al. \cite{ALS} consider the tree of elastic strings with local Kelvin-Voigt damping. They proved under some assumptions on the smoothness of the damping coefficients, say $W^{2,\infty}$, and some other considerations that the semigroup is exponentially stable if the damping coefficient is continuous at every node of the tree and otherwise it is  polynomially stable with a decay rate equal to $2$.

In light of all the above results it is obvious to say that the asymptotic behavior of the solution to system \eqref{IWKVS1} depends on the regularity on the damping coefficient function. In this paper we want to generalize and improve the polynomial decay rate given by Liu and Zhang in \cite{LZ} when the damping coefficient has a singularity at the interface and behaves like $x^{\alpha}$ with $\alpha\in(0,1)$. Precisely, we make the following assumptions: For every $j=1,\ldots,N$
\begin{itemize}
	\item[$\bullet$] There exist $a_{j},\,b_{j}\in[0,\ell_{j}]$ with $a_{j}<b_{j}$ such that 
	\begin{equation}\label{IWKVS4}
	[a_{j},b_{j}]\subset\supp(d_{j})\qquad\text{ and }\qquad d_{j}\in L^{\infty}(0,\ell_{j}).\tag{A \addtocounter{equation}{1}\theequation}
	\end{equation}
	\item[$\bullet$] There exist $\alpha_{j}\in(0,1)$ and $\kappa_{j}\geq 0$ such that
	\begin{equation}\label{IWKVS2}
	\lim_{x\to 0^{+}}\frac{d_{j}(x)}{x^{\alpha_{j}}}=\kappa_{j}.\tag{A \addtocounter{equation}{1}\theequation}
	\end{equation}
	\item There exists $\eta_{j}\in[0,1)$ such that
	\begin{equation}\label{IWKVS3}
	\lim_{x\to 0^{+}}\frac{xd_{j}'(x)}{d_{j}(x)}=\eta_{j}.\tag{A \addtocounter{equation}{1}\theequation}
	\end{equation}
\end{itemize}
\begin{rem}\label{IWKVS6}
The typical example of functions $d_{j}$ that satisfies assumptions \eqref{IWKVS4}, \eqref{IWKVS2} and \eqref{IWKVS3}, is when $d_{j}(x)=x^{\alpha_{j}}$ with $\alpha_{j}\in(0,1)$ for every $j=1,\ldots,N$. An interesting example too is when we take $d_{j}(x)=x^{\alpha_{j}'}|\ln(x)|^{\beta_{j}}$ with $\alpha_{j}'\in(0,1)$ and $\beta_{j}>0$ in this case  assumptions \eqref{IWKVS4}, \eqref{IWKVS2} and \eqref{IWKVS3} are satisfied with $\kappa_{j}=0$, $\alpha_{j}=\alpha_{j}'-\varepsilon_{j}$ for all $0<\varepsilon_{j}<\alpha_{j}'$ and $\eta_{j}=\alpha_{j}'$.
\end{rem}

Let $\ds\mathcal{H}= V\times\prod_{j=0}^{N}L^{2}(0,\ell_{j})$ be the Hilbert space endowed with the inner product define for $(u,v)=\left((u_{j})_{j=0,\ldots,N},(v_{j})_{j=0,\ldots,N}\right)\in\mathcal{H}$ and $(\tilde{u},\tilde{v})=\left((\tilde{u}_{j})_{j=0,\ldots,N},(\tilde{v}_{j})_{j=0,\ldots,N}\right)\in\mathcal{H}$ by
$$ 
\left\langle (u,v),(\tilde{u},\tilde{v})\right\rangle_{\mathcal{H}}=\sum_{j=0}^{N}\int_{0}^{\ell_{j}}u_{j}'(x)\,.\,\overline{\tilde{u}}_{j}'(x)\,\ud x+\int_{0}^{\ell_{j}}v_{j}(x)\,.\,\overline{\tilde{v}}_{j}(x)\,\ud x,
$$
where $V$ is the Hilbert space defined by
$$
V=\left\{(u_{j})_{j=0,\ldots,N}\in\prod_{j=0}^{N}H^{1}(0,\ell_{j}): u_{j}(\ell_{j})=0\;\forall j=0,\ldots,N;\;u_{0}(0)=\dots=u_{N}(0)\right\}.
$$
By setting $U(t)=\left((u_{j}(t))_{j=0,\ldots,N},(v_{j}(t))_{j=0,\ldots,N}\right)$ and $U^{0}=\left((u_{j}^{0})_{j=0,\ldots,N},(u_{j}^{1})_{j=0,\ldots,N}\right)$ we can rewrite system \eqref{IWKVS1} as a first order differential equation as follows
\begin{equation}\label{WPWKVS1}
\dot{U}(t)=\mathcal{A}U(t),\qquad U(0)=U^{0}\in\mathcal{D}(\mathcal{A}),
\end{equation}
where
$$
\mathcal{A}\left((u_{j})_{j=0,\ldots,N},(v_{j})_{j=0,\ldots,N}\right)=\left((v_{j})_{j=0,\ldots,N},(u_{0}'',[u_{1}'+d_{1}v_{1}']',\ldots,[u_{N}'+d_{N}v_{N}']')\right),
$$
with
\begin{align*}
\mathcal{D}(\mathcal{A})=\bigg\{\left((u_{j})_{j=0,\ldots,N},(v_{j})_{j=0,\ldots,N}\right)\in\mathcal{H}:\;(v_{j})_{j=0,\ldots,N}\in V;\;u_{0}''\in L^{2}(0,\ell_{0});
\\
[u_{j}'+d_{j}v_{j}']'\in L^{2}(0,\ell_{j})\,\forall\,j=1,\ldots,N;\;u_{0}'(0)+\sum_{j=1}^{N}u_{j}'(0)+d_{j}(0)v_{j}'(0)=0\bigg\}. 
\end{align*}
\begin{thm}\label{IWKVS5}
Assume that for $j=1,\ldots,N$ the coefficient functions $d_{j}\in\mathcal{C}([0,\ell_{j}])\cap\mathcal{C}^{1}((0,\ell_{j}))$ are such that conditions \eqref{IWKVS4}, \eqref{IWKVS2} and \eqref{IWKVS3} hold. Then, the semigroup $e^{t\mathcal{A}}$ associated to system \eqref{IWKVS1} (see Proposition \ref{WPWKVS4}) is polynomially stable precisely we have: There exists $C>0$ such that
$$
\|e^{t\mathcal{A}}U^{0}\|_{\mathcal{H}}\leq\frac{C}{(t+1)^{\frac{2-\alpha}{1-\alpha}k}}\left\|U^{0}\right\|_{\mathcal{D}(\mathcal{A}^{k})}\quad\forall\,U^{0}=\left((u_{j}^{0})_{j=0,\ldots,N},(u_{j}^{1})_{j=0,\ldots,N}\right)\in\mathcal{D}(\mathcal{A}^{k})\;\forall\,t\geq 0,
$$
where $\ds\alpha=\min\{\alpha_{1},\ldots,\alpha_{N}\}$.
\end{thm}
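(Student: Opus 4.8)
The natural route here is the frequency-domain criterion for polynomial stability, so my plan is to reduce everything to a resolvent bound on the imaginary axis and then invoke the Borichev--Tomilov theorem. Since Proposition \ref{WPWKVS4} supplies that $\mathcal{A}$ generates a $C_{0}$-semigroup of contractions, it suffices to establish two facts: that $i\mathbb{R}\subset\rho(\mathcal{A})$, and that
$$
\|(is-\mathcal{A})^{-1}\|_{\mathcal{L}(\mathcal{H})}\leq C\,|s|^{\frac{1-\alpha}{2-\alpha}}\qquad\text{as }|s|\to\infty .
$$
Indeed, with $\gamma=\frac{1-\alpha}{2-\alpha}$ the Borichev--Tomilov theorem converts this growth into $\|e^{t\mathcal{A}}\mathcal{A}^{-k}\|=O(t^{-k/\gamma})$, which is exactly the claimed rate $t^{-\frac{2-\alpha}{1-\alpha}k}$ on $\mathcal{D}(\mathcal{A}^{k})$. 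As a consistency check, $\alpha\to0$ gives $\gamma\to\tfrac12$ and rate $t^{-2}$, matching the step-coefficient case.

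For the spectral inclusion I would first check $0\in\rho(\mathcal{A})$ by solving the static equation $\mathcal{A}U=F$ through a Lax--Milgram argument on $V$, the Dirichlet conditions $u_{j}(\ell_{j})=0$ providing coercivity. To exclude purely imaginary spectrum, suppose $\mathcal{A}U=isU$ with $U=(u,v)$; since $\mathrm{Re}\langle\mathcal{A}U,U\rangle=-\sum_{j=1}^{N}\int_{0}^{\ell_{j}}d_{j}|v_{j}'|^{2}\,\ud x=0$, we get $v_{j}'=0$, hence $u_{j}'=0$, on each $[a_{j},b_{j}]\subset\supp(d_{j})$; solving the edge equations as ODEs and matching through the transmission and Dirichlet conditions forces $U\equiv0$. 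That no point of $i\mathbb{R}$ lies in the approximate point spectrum then follows from the same contradiction scheme used for the resolvent bound below, applied at bounded frequencies.

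The crux is the resolvent estimate. Setting $U=(is-\mathcal{A})^{-1}F$ with $U=(u,v)$, $F=(f,g)$ and eliminating $v=isu-f$, the elastic edge obeys $-u_{0}''-s^{2}u_{0}=g_{0}+isf_{0}$, while each damped edge obeys
$$
-\big((1+is\,d_{j})u_{j}'\big)'-s^{2}u_{j}=g_{j}+isf_{j}-(d_{j}f_{j}')'.
$$
The only a priori control comes from the dissipation, $\sum_{j}\int_{0}^{\ell_{j}}d_{j}|v_{j}'|^{2}\,\ud x=\mathrm{Re}\langle F,U\rangle\leq\|F\|_{\mathcal{H}}\|U\|_{\mathcal{H}}$, and this weight $d_{j}(x)\sim\kappa_{j}x^{\alpha_{j}}$ degenerates precisely at the transmission point.

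The heart of the matter, and the step I expect to be the hardest, is a boundary-layer analysis near $x=0$. The coefficient $1+is\,d_{j}(x)$ changes character across the scale where $|s|\,d_{j}(x)\simeq1$, separating an effectively elastic regime from a strongly viscous one; I would split each damped edge at a cut-off $x=h$ with $h\simeq|s|^{-\theta}$, estimate $u_{j},v_{j}$ inside the layer directly from the degenerate equation, and use multiplier identities (with multipliers such as $xu_{j}'$ and smooth cut-offs) outside it. Assumption \eqref{IWKVS3} on $xd_{j}'/d_{j}$ is exactly what is needed to absorb the boundary terms generated when integrating by parts against $d_{j}'$ inside the layer, while \eqref{IWKVS2} fixes the degeneracy rate. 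The transmission condition $u_{0}'(0)+\sum_{j=1}^{N}\big(u_{j}'(0)+d_{j}(0)v_{j}'(0)\big)=0$ together with the Dirichlet conditions then couples the pieces and closes the estimate. Optimizing $\theta$ against $\alpha=\min_{j}\alpha_{j}$ should produce the sharp power $|s|^{\frac{1-\alpha}{2-\alpha}}$, and it is in sharpening this layer estimate that the improvement over \cite{LZ} must reside.
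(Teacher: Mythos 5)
Your outline coincides with the paper's strategy in every structural respect: Borichev--Tomilov with $\gamma=\frac{1-\alpha}{2-\alpha}$, the spectral inclusion $i\R\subset\rho(\mathcal{A})$ via Lax--Milgram at zero plus a unique-continuation argument on the damped edges, and a layer at the scale $x\simeq|s|^{-\theta}$ (the paper's $\lambda_{n}^{-\delta_{j}}$, ultimately fixed at $\delta_{j}=\frac{1}{2-\alpha_{j}}$). But there is a genuine gap exactly at the step you call the crux: you never supply a mechanism for controlling the interface traces, and the tools you name cannot produce one. The dissipation only gives $\|d_{j}^{1/2}v_{j}'\|_{L^{2}}=o(|s|^{-\gamma/2})$, and since $d_{j}(0)=0$ this carries no information about $v_{j}(0)$ or $T_{j}(0)=u_{j}'(0)+d_{j}(0)v_{j}'(0)$; yet these traces are precisely what must be fed into the multiplier identity with $(x-\ell_{0})\overline{u}_{0}'$ on the elastic edge, via the transmission conditions, to kill the undamped energy on $(0,\ell_{0})$. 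The multiplier $xu_{j}'$ you propose degenerates at $x=0$ and so generates no interface terms at all, and with a smooth cutoff the commutator terms are of the same order as the energy one is trying to estimate, so the scheme does not close. Your reading of assumption \eqref{IWKVS3} as serving ``to absorb boundary terms when integrating by parts against $d_{j}'$'' is also not how it is used.

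What the paper actually does in the layer has three ingredients absent from your sketch. First, a Hardy-type inequality (Lemma \ref{PSWKVS16}, whose hypothesis $\lim_{x\to0^{+}}xd_{j}'/d_{j}=\eta_{j}\in[0,1)$ is the real role of \eqref{IWKVS3}), combined with the weighted characterization of Lemma \ref{PSWKVS18}, bounds $\|v_{j,n}\|_{L^{2}}$ on $[\lambda_{n}^{-\delta_{j}}/2,\lambda_{n}^{-\delta_{j}}]$ by $\|d_{j}^{1/2}v_{j,n}'\|_{L^{2}}$. Second, a mean-value selection yields points $\xi_{j,n}$ in that interval with $|v_{j,n}(\xi_{j,n})|+|T_{j,n}(\xi_{j,n})|=o(1)$. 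Third --- and this is the decisive idea --- the smallness is propagated from $\xi_{j,n}$ down to $x=0$ not by a multiplier but by recasting the equation as a first-order system for the Riemann-type invariants
$$
z_{j,n}^{\pm}(x)=\frac{i\lambda_{n}}{\sqrt{1+\lambda_{n}d_{j}(x)}}\int_{x}^{\xi_{j,n}}v_{j,n}(\tau)\,\ud\tau\pm v_{j,n}(x),
$$
solving it explicitly with the phase $q_{j,n}(x)=i\lambda_{n}\int_{0}^{x}(1+i\lambda_{n}d_{j}(s))^{-1/2}\,\ud s$ and verifying $|e^{\pm(q_{j,n}(x)-q_{j,n}(s))}|\leq1$ on the layer. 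It is this sign and size analysis of $\re\,q_{j,n}$ that forces the constraints $\delta_{j}\leq\frac{1}{2-\alpha_{j}}$ (or $\delta_{j}\geq1/\alpha_{j}$) alongside $\frac{\delta_{j}}{2}(1-\alpha_{j})\leq\frac{\gamma}{2}$, and hence \emph{generates} the exponent $\gamma=\frac{1-\alpha}{2-\alpha}$; in your proposal the exponent is promised by ``optimizing $\theta$,'' but no estimate in your scheme produces the competing constraints that the optimization is supposed to balance. Once $|v_{j,n}(0)|$ and $|T_{j,n}(0)|$ are $o(1)$, the remaining interior estimate inside the layer uses the multiplier $\overline{u}_{j,n}$ together with these trace bounds, and outside the layer the damping is nondegenerate ($d_{j}\gtrsim\lambda_{n}^{-\delta_{j}\alpha_{j}}$), so the dissipation controls $\|u_{j,n}'\|$ directly --- roughly the division of labor you anticipated, but only after the trace problem has been solved.
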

\begin{rem}
This theorem reveals that the stability order of the semigroup $e^{t\mathcal{A}}$ associated to problem \eqref{IWKVS1} depends on the behavior of the damping coefficients $d_{j}$ described by the parameters $\alpha_{j}$ for $j=1,\dots,N$. This result improves the decay rate of the energy given in \cite{LZ} from $\ds\frac{1}{1-\alpha}$ to $\ds\frac{2-\alpha}{1-\alpha}$ and make it more meaningful in fact, as $\alpha$ goes to $1^{-}$ the order of polynomial stability $\ds\frac{2-\alpha}{1-\alpha}$ goes to $\infty$ which is consistent with the exponential stability when $\alpha=1$ (see \cite{ALS,LZ,liu-liu2,LZ}) and as $\alpha$ goes to $0^{+}$ the order of polynomial stability $\ds\frac{2-\alpha}{1-\alpha}$ goes to $2$ which is consistent with the optimal order stability when $\alpha=0$ (see \cite{ARSVG,hassine1,liu-rao1}).
\end{rem}
\begin{rem}
When the coefficient functions $d_{j}$ behave polynomially near $0$ as $x^{\alpha_{j}}$ with $0<\alpha_{j}<1$ then from Theorem \ref{IWKVS5} the semigroup $e^{t\mathcal{A}}$ decays polynomially with the decay rate given above in the theorem. Moreover, when $d_{j}$ decay faster than $x^{\alpha_{j}}$ and slower than $x^{\alpha_{j}+\varepsilon}$ for all $\varepsilon>0$ this can may be seen for instance with the example of $d_{j}(x)=x^{\alpha_{j}}|\ln(x)|^{\beta_{j}}$ with $\beta_{j}>0$ then according to Remark \ref{IWKVS6} the semigroup $e^{t\mathcal{A}}$ decays polynomially with the decay rate equal to $\ds\frac{2-\alpha+\varepsilon}{1-\alpha+\varepsilon}$ for each $\varepsilon\in(0,\alpha)$ where $\ds\alpha=\min\{\alpha_{1},\ldots,\alpha_{N}\}$. Consequently, this decay rate is worse than if the coefficient functions $d_{j}$ behave near $0$ like $x^{\alpha_{j}}$ and better than if the coefficient functions $d_{j}$ behave near $0$ like $x^{\alpha_{j}'}$ for any $\alpha_{j}'>0$ such that $\alpha_{j}'<\alpha_{j}$.
\end{rem}

This article is organized as follows. In section \ref{WPWKVS}, we prove the well posedness of system \eqref{IWKVS1}. In section \ref{SSWKVS}, we show that the semigroup associated to the generator $\mathcal{A}$ is strongly stable . In section \ref{PSWKVS}, we prove the polynomial decay rate given by Theorem \ref{IWKVS5}.
\section{Well-posedness}\label{WPWKVS}
In this section we use the semigroup approach to prove the well-posedness of system \eqref{IWKVS1}.
\begin{prop}\label{WPWKVS4}
Assume that condition \eqref{IWKVS4} holds. Then $\mathcal{A}$ generates a $C_{0}$-semigroup of contractions $e^{t\mathcal{A}}$ on the Hilbert space $\mathcal{H}$.
\end{prop}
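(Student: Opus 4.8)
The plan is to invoke the Lumer--Phillips theorem, so that it suffices to check that $\mathcal{A}$ is densely defined, dissipative, and that $I-\mathcal{A}$ maps $\mathcal{D}(\mathcal{A})$ onto $\mathcal{H}$. Density of $\mathcal{D}(\mathcal{A})$ is immediate since it contains, for instance, the fields whose components are smooth and compactly supported in the open edges $(0,\ell_{j})$, and these are already dense in $\mathcal{H}$.

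For dissipativity I would take $U=((u_{j}),(v_{j}))\in\mathcal{D}(\mathcal{A})$ and compute $\langle\mathcal{A}U,U\rangle_{\mathcal{H}}$ directly from the definition of the inner product, integrating by parts once in each of the flux terms $\int_{0}^{\ell_{0}}u_{0}''\overline{v}_{0}\,\ud x$ and $\int_{0}^{\ell_{j}}[u_{j}'+d_{j}v_{j}']'\overline{v}_{j}\,\ud x$. The boundary contributions at the exterior endpoints $x=\ell_{j}$ vanish because $v_{j}\in V$ forces $v_{j}(\ell_{j})=0$, while the contributions at the transmission point $x=0$ collapse, thanks to $v_{0}(0)=\dots=v_{N}(0)$, into a single term proportional to the Kirchhoff-type quantity $u_{0}'(0)+\sum_{j=1}^{N}\big(u_{j}'(0)+d_{j}(0)v_{j}'(0)\big)$, which is exactly the constraint imposed in $\mathcal{D}(\mathcal{A})$ and hence vanishes. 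What survives is
$$
\re\langle\mathcal{A}U,U\rangle_{\mathcal{H}}=-\sum_{j=1}^{N}\int_{0}^{\ell_{j}}d_{j}(x)\,|v_{j}'(x)|^{2}\,\ud x\le 0,
$$
the sign being guaranteed by $d_{j}\ge 0$; this reproduces the dissipation law quoted in the introduction.

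For the range condition I would fix $(f,g)=((f_{j}),(g_{j}))\in\mathcal{H}$ and solve $(I-\mathcal{A})(u,v)=(f,g)$. Writing $v_{j}=u_{j}-f_{j}$ and substituting reduces the problem to a single elliptic transmission system for $u=(u_{j})\in V$, which I would recast weakly on $V$: find $u\in V$ with $B(u,\phi)=L(\phi)$ for all $\phi\in V$, where
$$
B(u,\phi)=\sum_{j=0}^{N}\int_{0}^{\ell_{j}}\!\big(u_{j}\overline{\phi}_{j}+u_{j}'\overline{\phi}_{j}'\big)\,\ud x+\sum_{j=1}^{N}\int_{0}^{\ell_{j}}\!d_{j}\,u_{j}'\overline{\phi}_{j}'\,\ud x
$$
and $L$ collects the data $(f_{j}+g_{j})$ together with the term $\int d_{j}f_{j}'\overline{\phi}_{j}'$. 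The form $B$ is bounded on $V$ because each $d_{j}\in L^{\infty}(0,\ell_{j})$, and it is coercive since $\re B(u,u)\ge\sum_{j}\|u_{j}'\|_{L^{2}}^{2}=\|u\|_{V}^{2}$, the $L^{2}$ and damping contributions being nonnegative; $L$ is bounded on $V$ by a Poincaré inequality, available because every $\phi_{j}$ vanishes at $\ell_{j}$. Lax--Milgram then yields a unique $u\in V$, and setting $v_{j}=u_{j}-f_{j}\in V$ gives a candidate preimage.

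The remaining, and most delicate, point is to verify that this weak solution actually lies in $\mathcal{D}(\mathcal{A})$. Testing against components supported in the open edges recovers the distributional identities $u_{0}''=u_{0}-f_{0}-g_{0}\in L^{2}(0,\ell_{0})$ and $[u_{j}'+d_{j}v_{j}']'=u_{j}-f_{j}-g_{j}\in L^{2}(0,\ell_{j})$, which in particular shows that the flux $u_{j}'+d_{j}v_{j}'$ belongs to $H^{1}(0,\ell_{j})$ and therefore admits a trace at $x=0$ even though $d_{j}$ is merely $L^{\infty}$ and may degenerate there. Reinstating general test functions $\phi\in V$ and integrating by parts backwards, the surviving boundary terms at the transmission point force precisely the Kirchhoff condition $u_{0}'(0)+\sum_{j=1}^{N}(u_{j}'+d_{j}v_{j}')(0)=0$, so the constraint defining $\mathcal{D}(\mathcal{A})$ holds in the sense of these flux traces. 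I expect this identification of the flux trace---rather than of $u_{j}'(0)$ and $d_{j}(0)v_{j}'(0)$ separately, which need not exist individually when $d_{j}$ vanishes at the interface---to be the main technical care needed; with it in place, Lumer--Phillips applies and $\mathcal{A}$ generates a $C_{0}$-semigroup of contractions on $\mathcal{H}$.
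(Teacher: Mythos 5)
Your overall strategy --- Lumer--Phillips, dissipativity by integrating the flux terms by parts, and solvability of a resolvent equation via Lax--Milgram --- is essentially the paper's, with one worthwhile variation: you solve $(I-\mathcal{A})(u,v)=(f,g)$ directly at $\lambda=1$, where the sesquilinear form $B$ is coercive for free because of the mass term, whereas the paper solves $\mathcal{A}(u,v)=(f,g)$ (establishing $0\in\rho(\mathcal{A})$ together with an a priori bound) and then invokes a contraction/perturbation argument to obtain $R(\lambda-\mathcal{A})=\mathcal{H}$ for small $\lambda>0$. Your route spares that last step. Your remark that only the flux $u_{j}'+d_{j}v_{j}'$ admits a trace at $x=0$ when $d_{j}$ is merely bounded --- so that the Kirchhoff condition in $\mathcal{D}(\mathcal{A})$ must be read as $u_{0}'(0)+\sum_{j=1}^{N}(u_{j}'+d_{j}v_{j}')(0)=0$ --- is correct and relevant; the paper leaves this implicit but uses exactly this reading later (the quantities $T_{j,n}(0)$ in Section 4).

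There is, however, a genuine flaw in your density argument, on two counts. First, fields whose components are smooth and compactly supported in the open edges need \emph{not} belong to $\mathcal{D}(\mathcal{A})$: under condition \eqref{IWKVS4} alone, $d_{j}$ is only $L^{\infty}$, so $(d_{j}v_{j}')'$ need not lie in $L^{2}(0,\ell_{j})$ even for smooth compactly supported $v_{j}$ (take $d_{j}$ a step function with a jump inside $\supp(v_{j}')$: then $(d_{j}v_{j}')'$ contains a Dirac mass). Second, even granting membership, such fields are not dense in $\mathcal{H}$: by Poincar\'e the gradient norm on $V$ is equivalent to the full $H^{1}$ norm, hence the junction trace $u\mapsto u_{0}(0)$ is continuous on $V$ and vanishes identically on compactly supported fields, so no $u\in V$ with $u_{0}(0)\neq 0$ can be approximated by them. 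The gap is easily repaired without new computations: a dissipative operator on a reflexive space with $R(I-\mathcal{A})=\mathcal{H}$ is automatically densely defined (see \cite[Theorem 1.4.6]{Pazy}), and you have already verified both hypotheses; alternatively one can approximate directly, matching the junction value. (The paper itself asserts density without proof, so this is the one point where your argument, as written, would actually fail.) With that correction your proof is complete.
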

\begin{proof}
For any $(u,v)=\big((u_{j})_{j=0,\ldots,N},(v_{j})_{j=0,\ldots,N}\big)\in\mathcal{D}(\mathcal{A})$, we have
$$
\langle\mathcal{A}(u,v),(u,v)\rangle_{\mathcal{H}}=-\sum_{j=1}^{N}\int_{0}^{\ell_{j}}d_{j}(x)|\partial_{x}v_{j}(x)|^{2}\,\ud x.
$$
This shows that the operator $\mathcal{A}$ is dissipative.
\\
Given $(f,g)=\big((f_{j})_{j=0,\ldots,N},(g_{j})_{j=0,\ldots,N}\big)\in\mathcal{H}$ we look for $(u,v)=\big((u_{j})_{j=0,\ldots,N},(v_{j})_{j=0,\ldots,N}\big)\in\mathcal{D}(\mathcal{A})$ such that $\mathcal{A}(u,v)=(f,g)$, this is also written
\begin{equation*}
\left\{\begin{array}{l}
v_{j}=f_{j}\quad\forall\, j=0,\ldots,N,
\\
u_{0}''=g_{0},
\\
(u_{j}'+d_{j}v_{j}')'=g_{j}\quad\forall\, j=1,\ldots,N,
\\
u_{j}(\ell_{j})=0\quad\forall\, j=0,\ldots,N,
\\
u_{0}(0)=\dots=u_{N}(0),
\\
\ds u_{0}'(0)+\sum_{j=1}^{N}u_{j}'(0)+d_{j}(0) v_{j}'(0)=0,
\end{array}\right.
\end{equation*}
or equivalently
\begin{equation}\label{WPWKVS3}
\left\{\begin{array}{l}
v_{j}=f_{j}\quad\forall\, j=0,\ldots,N,
\\
u_{0}''=g_{0},
\\
u_{j}''=g_{j}-(d_{j}f_{j}')'\quad\forall\, j=1,\ldots,N,
\\
u_{j}(\ell_{j})=0\quad\forall\, j=0,\ldots,N,
\\
u_{0}(0)=\dots=u_{N}(0),
\\
\ds u_{0}'(0)+\sum_{j=1}^{N}u_{j}'(0)+d_{j}(0)f_{j}'(0)=0.
\end{array}\right.
\end{equation}
For this aim we set the continuous coercive and bi-linear form in $V$
$$
L(u,\tilde{u})=\sum_{j=0}^{N}\int_{0}^{\ell_{j}}u_{j}'\,.\,\overline{\tilde{u}}_{j}'\,\ud x.
$$
By Lax-Milligram theorem there exists a unique element $(u_{j})_{j=0,\ldots,N}\in V$ such that
\begin{equation}\label{WPWKVS2}
\sum_{j=0}^{N}\int_{0}^{\ell_{j}}u_{j}'\,.\,\overline{\tilde{u}}_{j}'\,\ud x=-\sum_{j=1}^{N}\int_{0}^{\ell_{j}}d_{j}f_{j}'\,.\,\overline{\tilde{u}}_{j}'\,\ud x-\sum_{j=0}^{N}\int_{0}^{\ell_{j}}g_{j}\,.\,\overline{\tilde{u}}_{j}\,\ud x.
\end{equation}
It follows that by taking \eqref{WPWKVS2} in the sens of distribution that $\partial_{x}^{2}u_{0}=g_{0}$ in $L^{2}(0,\ell_{0})$ and $u_{j}''=g_{j}-[d_{j}f_{j}']'$ in $L^{2}(0,\ell_{j})$ for all $j=1,\ldots,N$. Back again to \eqref{WPWKVS2} and integrating by parts we find that $\ds u_{0}'(0)+\sum_{j=1}^{N}u_{j}'(0)+d_{j}(0)f_{j}'(0)=0$. This prove that the operator $\mathcal{A}$ is surjective. Moreover, by multiplying the second line by $\overline{u}_{1}$ of \eqref{WPWKVS3} and the third line by $\overline{u}_{j}$ and integrating  over $(0,\ell_{0})$ and $(0,\ell_{j})$ respectively and summing up then by Poincar\'e inequality and Cauchy-Schwarz inequality we find that there exists a constant $C>0$ such that
$$
\sum_{j=0}^{N}\int_{0}^{\ell_{j}}|u_{j}'|^{2}\ud x\leq C\left(\sum_{j=0}^{N}\int_{0}^{\ell_{j}}|f_{j}'|^{2}\ud x+\sum_{j=0}^{N}\int_{0}^{\ell_{j}}|g_{j}|^{2}\ud x\right)
$$
which combined with the first line of \eqref{WPWKVS3} leads to 
$$
\sum_{j=0}^{N}\int_{0}^{\ell_{j}}|u_{j}'|^{2}\ud x+\sum_{j=0}^{N}\int_{0}^{\ell_{j}}|v_{j}|^{2}\ud x\leq C\left(\sum_{j=0}^{N}\int_{0}^{\ell_{j}}|f_{j}'|^{2}\ud x+\sum_{j=0}^{N}\int_{0}^{\ell_{j}}|g_{j}|^{2}\ud x\right).
$$
This implies that $0\in\rho(\mathcal{A})$ and by contraction principle, we easily get $R(\lambda-\mathcal{A})=\mathcal{H}$ for sufficient small $\lambda>0$. Since $\mathcal{D}(\mathcal{A})$ is dense in $\mathcal{H}$ then thanks to Lumer-Phillips theorem \cite[Theorem 1.4.3]{Pazy}, $\mathcal{A}$ generates a $C_{0}$-semi-group of contractions on $\mathcal{H}$.
\end{proof}
As a consequence of Proposition \ref{WPWKVS4} we have the following well-posedness result of system \eqref{IWKVS1}.
\begin{cor}
For any initial data $U^{0}\in\mathcal{H}$, there exists a unique solution $U(t)\in\mathcal{C}([0,\,+\infty[,\,\mathcal{H})$ to the  problem \eqref{WPWKVS1}. Moreover, if $U^{0}\in\mathcal{D}(\mathcal{A})$, then
$$
U(t)\in\mathcal{C}([0,\,+\infty[,\, \mathcal{D}(\mathcal{A}))\cap\mathcal{C}^{1}([0,\,+\infty),\, \mathcal{H}).
$$
\end{cor}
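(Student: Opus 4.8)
The plan is to obtain this corollary as an immediate consequence of Proposition \ref{WPWKVS4} through the standard theory of abstract linear Cauchy problems; the substantive analytic work is already finished there, and what remains is merely to repackage the semigroup as a statement about solutions of \eqref{WPWKVS1}. First I would set the candidate solution $U(t)=e^{t\mathcal{A}}U^{0}$ for $t\geq0$, where $(e^{t\mathcal{A}})_{t\geq0}$ is the $C_{0}$-semigroup of contractions furnished by Proposition \ref{WPWKVS4}.

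For the first assertion I would invoke strong continuity of the semigroup: the map $t\mapsto e^{t\mathcal{A}}U^{0}$ is continuous from $[0,+\infty)$ into $\mathcal{H}$ for every $U^{0}\in\mathcal{H}$, so $U\in\mathcal{C}([0,+\infty),\mathcal{H})$, and this $U$ is the (mild) solution of \eqref{WPWKVS1}. Uniqueness follows from linearity together with the contraction property: if $U_{1},U_{2}$ solve \eqref{WPWKVS1} with the same initial datum, their difference solves the homogeneous problem with zero initial value, whence $\|U_{1}(t)-U_{2}(t)\|_{\mathcal{H}}\leq\|U_{1}(0)-U_{2}(0)\|_{\mathcal{H}}=0$ for all $t\geq0$.

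For the second assertion I would appeal to the classical regularity result for orbits issuing from the domain of the generator (see \cite[Chapter 4]{Pazy}): when $U^{0}\in\mathcal{D}(\mathcal{A})$, one has $e^{t\mathcal{A}}U^{0}\in\mathcal{D}(\mathcal{A})$ for all $t\geq0$, the map $t\mapsto e^{t\mathcal{A}}U^{0}$ is continuously differentiable from $[0,+\infty)$ into $\mathcal{H}$, and $\frac{\ud}{\ud t}e^{t\mathcal{A}}U^{0}=\mathcal{A}e^{t\mathcal{A}}U^{0}$, so that \eqref{WPWKVS1} holds in the strong sense. Because $\mathcal{A}$ is closed and $t\mapsto\mathcal{A}e^{t\mathcal{A}}U^{0}=\frac{\ud}{\ud t}e^{t\mathcal{A}}U^{0}$ is continuous into $\mathcal{H}$, the orbit is in fact continuous into $\mathcal{D}(\mathcal{A})$ endowed with the graph norm, which gives $U\in\mathcal{C}([0,+\infty),\mathcal{D}(\mathcal{A}))\cap\mathcal{C}^{1}([0,+\infty),\mathcal{H})$ as claimed.

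I do not anticipate any genuine obstacle: the only nontrivial ingredient — generation of the semigroup via dissipativity and maximality of $\mathcal{A}$ — was already established in Proposition \ref{WPWKVS4}, and the corollary is a verbatim instance of the abstract Cauchy-problem theorem. The single point deserving mild care is to note that the cited theorem delivers continuity of the orbit in the graph-norm topology of $\mathcal{D}(\mathcal{A})$, not merely pointwise membership in $\mathcal{D}(\mathcal{A})$, which is precisely what is needed for the regularity statement.
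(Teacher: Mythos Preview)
Your proposal is correct and matches the paper's own treatment: the paper states the corollary as a direct consequence of Proposition \ref{WPWKVS4} without giving any proof, and your argument simply spells out the standard semigroup reasoning (via \cite{Pazy}) that underlies this implication.
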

\section{Strong stability}\label{SSWKVS}
The aim of this section is to prove that the semi-group generated by the operator $\mathcal{A}$ is strongly stable. In another words this means that the energy of system \eqref{IWKVS1} degenerates over the time to zero.
\begin{lem}\label{SSWKVS4}
Assume that condition \eqref{IWKVS4} holds. Then for every $\lambda\in\R$ the operator $(i\lambda-\mathcal{A})$ is injective.
\end{lem}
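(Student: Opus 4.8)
The plan is to show that any $U=\big((u_j)_{j=0,\ldots,N},(v_j)_{j=0,\ldots,N}\big)\in\mathcal{D}(\mathcal{A})$ satisfying $(i\lambda-\mathcal{A})U=0$ must vanish. The case $\lambda=0$ is immediate: the well-posedness proof gives $0\in\rho(\mathcal{A})$, so $-\mathcal{A}=i\cdot 0-\mathcal{A}$ is injective. Hence assume $\lambda\neq0$ throughout.

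First I would write out the eigenvalue system. The relation $\mathcal{A}U=i\lambda U$ gives $v_j=i\lambda u_j$ for all $j$, together with $u_0''=-\lambda^2 u_0$ and $\big[(1+i\lambda d_j)u_j'\big]'=-\lambda^2 u_j$ for $j=1,\ldots,N$ (using $v_j'=i\lambda u_j'$). Taking the real part of $\langle\mathcal{A}U,U\rangle=i\lambda\|U\|_{\mathcal{H}}^2$ and invoking the dissipation identity of Proposition \ref{WPWKVS4} yields $\sum_{j=1}^N\int_0^{\ell_j}d_j|v_j'|^2\,\ud x=0$, so $v_j'=0$ a.e. on $\{d_j>0\}$. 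Because $[a_j,b_j]\subset\supp(d_j)=\overline{\{d_j>0\}}$ with $d_j$ continuous, the open set $\{d_j>0\}$ meets $(a_j,b_j)$ and hence contains a nonempty open subinterval $(c_j,e_j)$. On $(c_j,e_j)$ one has $v_j'=0$, hence (as $\lambda\neq0$) $u_j'=0$, so $u_j$ is constant there; feeding this back into $\big[(1+i\lambda d_j)u_j'\big]'=-\lambda^2 u_j$ forces $u_j\equiv0$ on $(c_j,e_j)$.

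Next I would propagate this vanishing to the whole edge by an ODE-uniqueness argument. Setting $w_j=(1+i\lambda d_j)u_j'$, one has $w_j\in H^1(0,\ell_j)$ with $w_j'=-\lambda^2 u_j$ and $u_j'=(1+i\lambda d_j)^{-1}w_j$, so the pair $(u_j,w_j)$ solves a first-order linear system whose coefficient matrix is bounded, since $|(1+i\lambda d_j)^{-1}|\le1$. As $u_j$ and $w_j$ both vanish at a point of $(c_j,e_j)$, a Carath\'eodory-type uniqueness theorem gives $u_j\equiv0$ and $w_j\equiv0$, hence $u_j\equiv0$ and $u_j'\equiv0$ on all of $[0,\ell_j]$ for every $j=1,\ldots,N$. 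In particular $u_j(0)=u_j'(0)=0$ and $v_j'(0)=0$. Finally, the transmission condition gives $u_0(0)=u_1(0)=\cdots=u_N(0)=0$, and the flux condition $u_0'(0)+\sum_{j=1}^N\big(u_j'(0)+d_j(0)v_j'(0)\big)=0$ reduces to $u_0'(0)=0$; solving $u_0''=-\lambda^2 u_0$ with zero Cauchy data at $x=0$ then yields $u_0\equiv0$. Thus $U=0$.

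The main obstacle is the propagation step across the undamped portions of each edge: the coefficient $d_j$ is only bounded and is singular at the transmission point, so one cannot appeal to classical smooth ODE theory. What makes it work is that $1+i\lambda d_j$ never vanishes (its modulus is at least $1$), so $w_j=(1+i\lambda d_j)u_j'$ is genuinely $H^1$ and the reduced first-order system has $L^\infty$ coefficients, allowing a Carath\'eodory-type uniqueness theorem to carry the zero Cauchy data from the damped subinterval across the interface and out to the Dirichlet ends.
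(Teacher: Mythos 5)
Your proposal is correct and follows essentially the same route as the paper: dissipativity of $\mathcal{A}$ forces $d_{j}v_{j}'=0$, hence $u_{j}$ vanishes on part of the damped region, ODE uniqueness propagates this vanishing along each damped edge, and the transmission conditions then give zero Cauchy data at the node for $u_{0}$, which satisfies $u_{0}''=-\lambda^{2}u_{0}$. The only cosmetic difference is the propagation step: since $d_{j}v_{j}'=0$ holds in $L^{2}(0,\ell_{j})$, the paper substitutes it into the eigenvalue equation to reduce it to the constant-coefficient ODE $\lambda^{2}u_{j}+u_{j}''=0$ on the whole edge, so your Carath\'eodory-type argument for the first-order system $\big(u_{j},(1+i\lambda d_{j})u_{j}'\big)$ (and the attendant concern about the non-smooth coefficient) is valid but unnecessary, while your care in extracting an open subinterval of $\{d_{j}>0\}$ inside $(a_{j},b_{j})$ is in fact slightly more scrupulous than the paper's direct claim that $u_{j}'=0$ a.e.\ on $[a_{j},b_{j}]$.
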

\begin{proof}
Since $0\in\rho(\mathcal{A})$ (according to the proof of Theorem \ref{WPWKVS2}), we only need to check that for every $\lambda\in\R^{*}$ we have $\ker(i\lambda I-\mathcal{A})=\{0\}$. Let $\lambda\neq 0$ and $(u,v)=\big((u_{j})_{j=0,\ldots,N},(v_{j})_{j=0,\ldots,N}\big)\in\mathcal{D}(\mathcal{A})$ such that 
\begin{equation}\label{SSWKVS6}
\mathcal{A}(u,v)=i\lambda (u,v)
\end{equation}
Taking the real part of the inner product in $\mathcal{H}$ of \eqref{SSWKVS6} with $(u,v)$ and using the dissipation of $\mathcal{A}$, we get
\begin{equation*}
\re\,i\lambda\|(u,v)\|_{\mathcal{H}}^{2}=\re\,\langle\mathcal{A}(u,v),(u,v)\rangle_{\mathcal{H}}=-\sum_{j=1}^{N}\int_{0}^{\ell_{j}}d_{j}(x)|v_{j}'(x)|^{2}\,\ud x=0,
\end{equation*}
which implies that
\begin{equation}\label{SSWKVS1}
d_{j}v_{j}'=0\quad \text{in }L^{2}(0,\ell_{j}),\; \forall\, j=1,\ldots,N.
\end{equation}
Inserting \eqref{SSWKVS1} into \eqref{SSWKVS6}, we obtain
\begin{equation}\label{SSWKVS2}
\left\{\begin{array}{ll}
i\lambda u_{j}=v_{j}&\text{in }(0,\ell_{j}),\quad j=0,\ldots,N,
\\
\lambda^{2} u_{j}+u_{j}''=0&\text{in }(0,\ell_{j}),\quad j=0,\ldots,N,
\\
u_{j}(\ell_{j})=0&j=0,\ldots,N,
\\
u_{1}(0)=\dots=u_{N}(0)&
\\
\ds\sum_{j=0}^{N}u_{j}'(0)=0&
\end{array}\right.
\end{equation}
Combining \eqref{SSWKVS1} with the first line of \eqref{SSWKVS2}, we get
$$
u_{j}'=0 \quad\text{a.e in } [a_{j},b_{j}]\quad\forall\,j=1,\ldots,N.
$$
Since $u_{j}\in H^{2}(0,\ell_{j})$ then following to the embedding $H^{1}(0,\ell_{j})\hookrightarrow C^{0}(0,\ell_{j})$ we have
\begin{equation}\label{SSWKVS3}
u_{j}'\equiv0\quad\text{in } [a_{j},b_{j}]\quad\forall\,j=1,\ldots,N.
\end{equation}
Which by the second line of \eqref{SSWKVS2} leads to 
$$
u_{j}\equiv 0\quad\text{in } [a_{j},b_{j}]\quad\forall\,j=1,\ldots,N.
$$
So for every $j=1,\ldots,N$, $u_{j}$ and $v_{j}$ are solution to the following problem
$$
\left\{\begin{array}{ll}
i\lambda u_{j}=v_{j}&\text{in }(0,\ell_{j}),
\\
\lambda^{2} u_{j}+u_{j}''=0&\text{in }(0,\ell_{j}),
\\
u_{j}(a_{j})=u_{j}'(a_{j})=0,
\end{array}\right.
$$
and this clearly gives that $u_{j}=v_{j}\equiv 0$ in $(0,\ell_{j})$ for every $j=1,\ldots,N$. Following to system \eqref{SSWKVS2} we have then
\begin{equation}\label{SSWKVS9}
\left\{\begin{array}{ll}
i\lambda u_{0}=v_{0}&\text{in }(0,\ell_{0}),
\\
\lambda^{2} u_{0}+u_{0}''=0&\text{in }(0,\ell_{0})
\\
u_{0}(\ell_{0})=u_{0}'(\ell_{0})=0&
\end{array}\right.
\end{equation}
which gives  also that $u_{0}=v_{0}\equiv 0$ in $(0,\ell_{0})$. This shows that $(u,v)=(0,0)$ and consequently $(i\lambda I-\mathcal{A})$ is injective for all $\lambda\in\R$.
\end{proof}
\begin{lem}\label{SSWKVS5}
Assume that condition \eqref{IWKVS4} holds. Then for every $\lambda\in\R$ the operator $(i\lambda-\mathcal{A})$ is surjective.
\end{lem}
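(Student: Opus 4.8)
The plan is to recast the resolvent equation $(i\lambda-\mathcal{A})(u,v)=(f,g)$ as a coercive-plus-compact variational problem on $V$ and then invoke the Fredholm alternative, feeding in the injectivity already proved in Lemma \ref{SSWKVS4}. The case $\lambda=0$ is immediate since $0\in\rho(\mathcal{A})$, so I fix $\lambda\in\R^{*}$. Given $(f,g)=\big((f_{j})_{j},(g_{j})_{j}\big)\in\mathcal{H}$, the first block of the system forces $v_{j}=i\lambda u_{j}-f_{j}$, which I substitute throughout; in particular $v_{j}'=i\lambda u_{j}'-f_{j}'$, so the flux becomes $u_{j}'+d_{j}v_{j}'=(1+i\lambda d_{j})u_{j}'-d_{j}f_{j}'$. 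The second block then reduces to the second-order system $-\lambda^{2}u_{0}-u_{0}''=g_{0}+i\lambda f_{0}$ on $(0,\ell_{0})$ and $-\lambda^{2}u_{j}-[(1+i\lambda d_{j})u_{j}']'=g_{j}+i\lambda f_{j}-[d_{j}f_{j}']'$ on $(0,\ell_{j})$ for $j\geq1$, together with $u_{j}(\ell_{j})=0$, the continuity $u_{0}(0)=\dots=u_{N}(0)$, and the Kirchhoff-type condition $u_{0}'(0)+\sum_{j=1}^{N}(1+i\lambda d_{j}(0))u_{j}'(0)=\sum_{j=1}^{N}d_{j}(0)f_{j}'(0)$.

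Multiplying by a test function $\phi=(\phi_{j})_{j}\in V$ and integrating by parts, the interface terms collapse by continuity of the $\phi_{j}$ at $0$ and cancel exactly against the Kirchhoff condition, leaving the weak problem: find $u\in V$ with $a(u,\phi)=\ell(\phi)$ for all $\phi\in V$, where
\[
a(u,\phi)=\sum_{j=0}^{N}\int_{0}^{\ell_{j}}u_{j}'\,\overline{\phi}_{j}'\,\ud x+i\lambda\sum_{j=1}^{N}\int_{0}^{\ell_{j}}d_{j}u_{j}'\,\overline{\phi}_{j}'\,\ud x-\lambda^{2}\sum_{j=0}^{N}\int_{0}^{\ell_{j}}u_{j}\,\overline{\phi}_{j}\,\ud x
\]
and $\ell(\phi)=\sum_{j=0}^{N}\int_{0}^{\ell_{j}}(g_{j}+i\lambda f_{j})\overline{\phi}_{j}\,\ud x+\sum_{j=1}^{N}\int_{0}^{\ell_{j}}d_{j}f_{j}'\,\overline{\phi}_{j}'\,\ud x$. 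Since $d_{j}\in L^{\infty}(0,\ell_{j})$, both $a$ and $\ell$ are continuous on $V$.

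The key observation is that the top-order part $b(u,\phi)$ (the first two sums) is $V$-coercive: its damping term is purely imaginary, so $\re\,b(u,u)=\sum_{j=0}^{N}\int_{0}^{\ell_{j}}|u_{j}'|^{2}\,\ud x=\|u\|_{V}^{2}$ (that this is a genuine norm uses Poincar\'e via $u_{j}(\ell_{j})=0$). By Lax-Milgram, $b$ induces an isomorphism of $V$, while the remaining term $-\lambda^{2}(u,\phi)_{L^{2}}$ is a compact perturbation through the Rellich embedding $V\hookrightarrow\prod_{j}L^{2}(0,\ell_{j})$. Hence the operator associated with $a$ is a compact perturbation of an isomorphism, and the Fredholm alternative applies: $a(u,\cdot)=\ell$ is solvable for every continuous $\ell$ if and only if the homogeneous problem admits only $u=0$. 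To close, I match the homogeneous problem with the kernel: if $a(u,\phi)=0$ for all $\phi$, then setting $v_{j}=i\lambda u_{j}$ and undoing the reduction shows $(u,v)\in\mathcal{D}(\mathcal{A})$ with $(i\lambda-\mathcal{A})(u,v)=0$, so $(u,v)=0$ by Lemma \ref{SSWKVS4}; thus the variational problem is solvable.

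Finally I recover regularity and membership in $\mathcal{D}(\mathcal{A})$. Testing against $\phi$ supported in the interior of a single edge gives the ODEs distributionally, whence $u_{0}''\in L^{2}$ and, from the flux identity $[u_{j}'+d_{j}v_{j}']'=i\lambda v_{j}-g_{j}\in L^{2}$, also $[u_{j}'+d_{j}v_{j}']'\in L^{2}$; general test functions then restore the continuity and Kirchhoff conditions, so $(u,v)\in\mathcal{D}(\mathcal{A})$ solves $(i\lambda-\mathcal{A})(u,v)=(f,g)$. The hard part will be exactly this membership step: because $d_{j}v_{j}'$ is only $L^{2}$, the quantity $u_{j}'$ by itself gains no regularity, and one must instead work with the combined flux $u_{j}'+d_{j}v_{j}'$ — it is precisely this combination, and not $u_{j}'$ alone, whose derivative the equation controls. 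This same feature is what blocks a naive compact-resolvent argument and makes the variational formulation the robust route.
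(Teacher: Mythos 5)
Your proposal is correct and takes essentially the same route as the paper: after the elimination $v_{j}=i\lambda u_{j}-f_{j}$, the paper likewise applies Lax--Milgram to the top-order operator $Au=\big({-u_{0}''},-((1+i\lambda d_{j})u_{j}')'\big)$ viewed from $V$ to $V'$, treats the $\lambda^{2}$ term as a compact perturbation via the compact embedding of $V$ into the $L^{2}$ product, and concludes by Fredholm's alternative, killing the kernel with the argument of Lemma \ref{SSWKVS4}. Your only deviations --- working with the sesquilinear form rather than the operator equation $u-\lambda^{2}A^{-1}u=A^{-1}(\cdot)$, and verifying that a homogeneous variational solution lies in $\ker(i\lambda-\mathcal{A})$ so as to cite Lemma \ref{SSWKVS4} directly instead of rerunning its computation --- are cosmetic.
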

\begin{proof}
Since $0\in\rho(\mathcal{A})$ (see proof of Theorem \ref{WPWKVS2}), we only need to check that for every $\lambda\in\R^{*}$ we have $R(i\lambda I-\mathcal{A})=\mathcal{H}$. Let $\lambda\neq 0$, then given $(f,g)=\big((f_{j})_{j=0,\ldots,N},(g_{j})_{j=0,\ldots,N}\big)\in\mathcal{H}$ we are looking for $(u,v)=\big((u_{j})_{j=0,\ldots,N},(v_{j})_{j=0,\ldots,N}\big)\mathcal{D}(\mathcal{A})$ such that 
\begin{equation}\label{SSWKVS7}
(i\lambda I-\mathcal{A})(u,v)=(f,g),
\end{equation}
or equivalently
\begin{equation}\label{SSWKVS8}
\left\{\begin{array}{ll}
v_{j}=i\lambda u_{j}-f_{j}&\text{in }(0,\ell_{j}),\quad j=0,\ldots,N,
\\
-\lambda^{2} u_{0}-u_{0}''=i\lambda f_{0}+g_{0}&\text{in }(0,\ell_{0}),
\\
-\lambda^{2} u_{j}-(u_{j}'+i\lambda d_{j}u_{j'})'=i\lambda f_{j}+g_{j}-(d_{j}f_{j}')'&\text{in }(0,\ell_{j}),\quad j=1,\ldots,N.
\end{array}\right.
\end{equation}
We define  for all $u=((u_{j})_{j=0,\ldots,N}\big)\in V$ the operator
$$
Au=\big(-u_{0}'',-(u_{1}'+i\lambda d_{j}u_{1}')',\ldots,-(u_{N}'+i\lambda d_{N} u_{N}')'\big).
$$
Thanks to Lax-Milgram's theorem \cite[Theorem 2.9.1]{LM}, it is easy to show that $A$ is an isomorphism from $V$ into $V'$ (where $V'$ is the dual space of $V$ with respect to the pivot space $H$). Then the second ant the third line of \eqref{SSWKVS8} can be written as follows
\begin{equation}\label{SSWKVS12}
u-\lambda^{2}A^{-1}u=A^{-1}\big(i\lambda f_{0}+g_{0},i\lambda f_{1}+g_{1}-(d_{1}f_{1}')',\ldots,i\lambda f_{N}+g_{N}-(d_{N}f_{N}')'\big).
\end{equation}
If $u\in\ker(I-\lambda A^{-1})$, then we obtain
\begin{equation}\label{SSWKVS10}
\left\{\begin{array}{ll}
\lambda^{2}u_{0}+u_{0}''=0&\text{in }(0,\ell_{0}),
\\
\lambda^{2}u_{j}+(u_{j}'+i\lambda d_{j}u_{j}')'=0&\text{in }(0,\ell_{j}),\quad j=1,\ldots,N.
\end{array}\right.
\end{equation}
For $j=1,\ldots,N$ we multiply each line of \eqref{SSWKVS10} by $\overline{u}_{j}$ and integrating over $(0,\ell_{j})$ and summing up
\begin{equation}\label{SSWKVS11}
\lambda^{2}\sum_{j=0}^{N}\int_{0}^{\ell_{j}}|u_{j}|^{2}\,\ud x-\sum_{j=0}^{N}\int_{0}^{\ell_{j}}|u_{j}'|^{2}\,\ud x-i\lambda\sum_{j=1}^{N}\int_{0}^{\ell_{j}}d_{j}|u_{j}'|^{2}\,\ud x=0.
\end{equation}
By taking the imaginary part of \eqref{SSWKVS11} we get
$$
\sum_{j=1}^{N}\int_{0}^{\ell_{j}}d_{j}|u_{j}'|^{2}\,\ud x=0.
$$
This means that $d_{j}u_{j}'=0$ in $(0,\ell_{j})$ for all $i=1,\ldots,N$, which inserted into \eqref{SSWKVS10}  one gets
$$
\lambda^{2}u_{j}+u_{j}''=0\quad\text{in }(0,\ell_{j}),\quad j=0,\ldots,N.
$$
Then using the same arguments as proof of Lemma \ref{SSWKVS4} we find that $u=0$. Hence, we proved that $\ker(I-\lambda^{2}A^{-1})=\{0\}$. Besides, thanks to the compact embeddings $V\hookrightarrow H$ and $H\hookrightarrow V'$ the operator $A^{-1}$ is compact in $V$. So that, following to Fredholm's alternative, the operator $(I-\lambda^{2}A^{-1})$ is invertible in $V$. Therefore, equation \eqref{SSWKVS12} have a unique solution in $V$. Thus, the operator $i\lambda I-\mathcal{A}$ is surjective. This completes the proof.
\end{proof}
Thanks to Lemmas \ref{SSWKVS4} and \ref{SSWKVS5} and the closed graph theorem we have $\sigma(\mathcal{A})\cap i\R=\emptyset$. This with Arendt and Batty \cite{AB} result following to which a $C_{0}$-semi-group of contractions in a Banach space is strongly stable, if $\rho(\mathcal{A})\cap i\R$ contains only a countable number of continuous spectrum of $\mathcal{A}$ lead to the following
\begin{thm}
Assume that condition \eqref{IWKVS4} holds. Then the semigroup $(e^{t \mathcal{A}})_{t \geq 0}$ is strongly stable in the energy space $\mathcal{H}$ i.e.,
$$
\lim_{t\to+\infty}\|e^{t \mathcal{A}}U^{0}\|_{\mathcal{H}}=0,\quad \forall\,U^{0}\in\mathcal{H}.
$$
\end{thm}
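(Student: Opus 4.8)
The plan is to deduce strong stability from the Arendt--Batty criterion, using the two preceding lemmas to clear the imaginary axis of spectrum entirely. First I would recall that by Proposition \ref{WPWKVS4} the operator $\mathcal{A}$ generates a $C_{0}$-semigroup of contractions, so $(e^{t\mathcal{A}})_{t\geq 0}$ is in particular uniformly bounded, and $\mathcal{A}$ is a closed, densely defined operator on the Hilbert space $\mathcal{H}$.

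Next I would fix $\lambda\in\R$ and show that $i\lambda\in\rho(\mathcal{A})$. By Lemma \ref{SSWKVS4} the operator $i\lambda I-\mathcal{A}$ is injective, and by Lemma \ref{SSWKVS5} it is surjective, hence bijective from $\mathcal{D}(\mathcal{A})$ onto $\mathcal{H}$. Since $\mathcal{A}$ is closed, so is $i\lambda I-\mathcal{A}$, and the closed graph theorem (equivalently the bounded inverse theorem) guarantees that $(i\lambda I-\mathcal{A})^{-1}$ is a bounded operator on $\mathcal{H}$. Therefore $i\lambda\in\rho(\mathcal{A})$ for every $\lambda\in\R$, which yields $\sigma(\mathcal{A})\cap i\R=\emptyset$.

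Finally I would invoke the Arendt--Batty theorem \cite{AB}: a bounded $C_{0}$-semigroup on a reflexive Banach space whose generator carries no point spectrum on $i\R$ and for which $\sigma(\mathcal{A})\cap i\R$ is at most countable is strongly stable. Here $\mathcal{H}$ is a Hilbert space, hence reflexive; the semigroup is contractive, hence bounded; and $\sigma(\mathcal{A})\cap i\R=\emptyset$ is trivially countable and contains no eigenvalues (note also that $\lambda\in\rho(\mathcal{A})$ iff $\bar{\lambda}\in\rho(\mathcal{A}^{*})$, so the empty intersection also rules out imaginary eigenvalues of $\mathcal{A}^{*}$). All hypotheses being met, one concludes that $\lim_{t\to+\infty}\|e^{t\mathcal{A}}U^{0}\|_{\mathcal{H}}=0$ for every $U^{0}\in\mathcal{H}$.

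As for the main obstacle, the substantive work has already been carried out in Lemmas \ref{SSWKVS4} and \ref{SSWKVS5}; producing the empty intersection $\sigma(\mathcal{A})\cap i\R=\emptyset$ is the crucial input, and it rests on the unique-continuation-type argument that a purely imaginary eigenfunction must vanish on the damped subinterval $[a_{j},b_{j}]$ and then propagate to zero along each edge. The only point requiring genuine care when assembling the theorem itself is confirming that $(i\lambda I-\mathcal{A})^{-1}$ is \emph{bounded}, rather than merely that $i\lambda I-\mathcal{A}$ is a bijection; this is why I would explicitly record the closedness of $\mathcal{A}$ together with the closed graph theorem before asserting membership in the resolvent set.
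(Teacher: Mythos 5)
Your proposal is correct and follows essentially the same route as the paper: both combine Lemmas \ref{SSWKVS4} and \ref{SSWKVS5} with the closed graph theorem to conclude $\sigma(\mathcal{A})\cap i\R=\emptyset$, and then apply the Arendt--Batty theorem \cite{AB} to the contraction semigroup from Proposition \ref{WPWKVS4}. Your explicit remarks on the closedness of $\mathcal{A}$ and the boundedness of the resolvent merely spell out details the paper leaves implicit.
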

\section{Polynomial stability}\label{PSWKVS}
In this section, we prove Theorem \ref{IWKVS5}. The idea is to estimate the energy norm and boundary terms at the interface by the local viscoelastic damping. The difficulty is to deal with the higher order boundary term at the interface so that the energy on $(0,\ell_{0})$ can be controlled by the viscoelastic damping on $(0,\ell_{j})$ for every $j=1,\ldots, N$. Our proof is based on the following result
\begin{prop}\cite[Theorem 2.4]{borichevtomilov}\label{PSWKVS2}
Let $e^{tB}$ be a bounded $C_{0}$-semi-group on a Hilbert space $X$ with generator $B$ such that $i\R\in\rho(A)$. Then $e^{tB}$ is polynomially stable with order $\ds\frac{1}{\gamma}$ i.e. there exists $C>0$ such that
$$
\|e^{tB}u\|_{X}\leq \frac{C}{t^{\frac{1}{\gamma}}}\|u\|_{\mathcal{D}(B)}\quad\forall\,u\in\mathcal{D}(B)\;\forall\, t\geq 0,
$$
if and only if
$$
\limsup_{|\lambda|\rightarrow\infty}\|\lambda^{-\gamma}(i\lambda I-B)^{-1}\|_{X}<\infty.
$$
\end{prop}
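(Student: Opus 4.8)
The statement is the abstract resolvent characterisation of polynomial decay of Borichev and Tomilov, and I would prove the two implications separately, the harder one being that a \emph{pointwise} polynomial bound on the resolvent forces a \emph{rate} of decay. Throughout set $R(\lambda)=(\lambda I-B)^{-1}$ and $M=\sup_{t\ge 0}\|e^{tB}\|<\infty$. The hypothesis is $i\R\subset\rho(B)$; in particular $0\in\rho(B)$, so $B^{-1}$ is bounded and commutes with the semigroup, and (since $\|u\|_{\mathcal D(B)}\simeq\|Bu\|$) the stated orbitwise bound $\|e^{tB}u\|_X\le Ct^{-1/\gamma}\|u\|_{\mathcal D(B)}$ is equivalent to the operator bound $\|e^{tB}B^{-1}\|\le C't^{-1/\gamma}$. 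The plan is to pass between the time variable $t$ and the frequency variable $s$ through the Laplace/Fourier transform of the orbit $t\mapsto e^{tB}x$, whose boundary value on $i\R$ is exactly $R(is)x$, making decisive use of the Plancherel identity available because $X$ is a Hilbert space:
\[
\int_{0}^{\infty}e^{-2\varepsilon t}\|e^{tB}x\|_{X}^{2}\,\ud t=\frac{1}{2\pi}\int_{\R}\|R(\varepsilon+is)x\|_{X}^{2}\,\ud s,\qquad \varepsilon>0.
\]

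\emph{Necessity (decay $\Rightarrow$ resolvent bound).} I would start from the Laplace representation $R(\lambda)=\int_{0}^{\infty}e^{-\lambda t}e^{tB}\,\ud t$ for $\operatorname{Re}\lambda>0$ together with the resolvent identity $R(is)=is\,R(is)B^{-1}-B^{-1}$, which reduces the claim to the frequency decay $\|R(is)B^{-1}\|=O(|s|^{\gamma-1})$. Since $0\in\rho(B)$, the decay hypothesis gives $\|e^{tB}B^{-k}x\|\le C(1+t)^{-1/\gamma}\|B^{-(k-1)}x\|$, so the weighted orbits $t\mapsto t^{m}e^{tB}B^{-k}x$ lie in $L^{2}(0,\infty;X)$ once $k$ is taken large enough relative to $\gamma<1$. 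The Plancherel identity transfers this temporal integrability into frequency integrability of $s\mapsto R(is)B^{-k}x$ and of its $s$-derivative, after which a Sobolev-type embedding in the frequency variable upgrades the $L^{2}$ control to the pointwise bound. Multiplying through by $is$ then returns the asserted $\limsup_{|s|\to\infty}\||s|^{-\gamma}R(is)\|<\infty$.

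\emph{Sufficiency (resolvent bound $\Rightarrow$ decay).} This is the substantive implication and the one actually invoked in the paper. I would represent the smoothed orbit through the inverse Fourier transform of the resolvent along $i\R$, the representation converging because $R(is)B^{-1}=\tfrac1{is}\bigl(R(is)+B^{-1}\bigr)=O(|s|^{\gamma-1})$ with $\gamma<1$. The decay rate is then extracted by splitting the frequency integral at a threshold $|s|\simeq\rho(t)$ chosen in terms of $t$: the low frequencies are controlled by the uniform resolvent bound on the compact set $[-\rho,\rho]$, while the high frequencies are estimated via the Plancherel identity, after transferring sufficiently many powers of $B^{-1}$ to render the relevant frequency integrals square-summable and weighting by $\|R(is)\|=O(|s|^{\gamma})$; optimising $\rho(t)$ balances the two contributions and produces exactly the rate $t^{-1/\gamma}$.

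I expect this last balancing step to be the main obstacle. The difficulty is precisely that the resolvent bound grows in $|s|$, so the inverse-transform integral is only conditionally convergent and the naive $L^{1}$ (triangle-inequality) estimate of the high-frequency tail diverges for $\gamma\in(0,1)$; one must instead exploit square-integrability through Plancherel and Cauchy--Schwarz, which is where the Hilbert space hypothesis is indispensable. In a general Banach space the same scheme yields only the logarithmically weaker rate of Batty--Duyckaerts, and it is exactly the Plancherel identity that removes the logarithmic loss and delivers the sharp Borichev--Tomilov exponent. Once the operator estimate $\|e^{tB}B^{-1}\|\le Ct^{-1/\gamma}$ is obtained it is, since $0\in\rho(B)$, equivalent to the stated orbitwise bound, which completes the proof.
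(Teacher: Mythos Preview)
The paper does not prove this proposition at all: it is quoted verbatim as \cite[Theorem 2.4]{borichevtomilov} and used as a black box to reduce Theorem~\ref{IWKVS5} to the resolvent estimate~\eqref{PSWKVS3}. There is therefore no ``paper's own proof'' to compare against; everything after Proposition~\ref{PSWKVS2} in Section~\ref{PSWKVS} is devoted to verifying the resolvent condition for the specific operator $\mathcal{A}$, not to establishing the abstract equivalence.

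Your sketch is a reasonable outline of the Borichev--Tomilov argument itself, and the key ingredients you identify---the Plancherel identity on the Hilbert space, the reduction to $\|e^{tB}B^{-1}\|$, and the frequency cutoff balancing---are indeed the ones used in the original paper. Two caveats: first, you tacitly assume $\gamma<1$ (e.g.\ in asserting $R(is)B^{-1}=O(|s|^{\gamma-1})$ is integrable at infinity), whereas the Borichev--Tomilov theorem holds for all $\gamma>0$ and the general case requires iterating with higher powers $B^{-k}$, as you hint at but do not make fully precise. Second, your necessity argument is quite sketchy; the passage from $L^2$ frequency bounds to pointwise resolvent bounds via ``a Sobolev-type embedding'' glosses over the operator-valued nature of the functions involved. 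Neither of these is fatal to the outline, but a complete proof would need to address them. In any case, for the purposes of this paper the proposition is simply imported from the literature.
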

According to Proposition \ref{PSWKVS2} we shall verify that for $\alpha=\min\{\alpha_{1},\ldots,\alpha_{N}\}$ and $\ds\gamma=\frac{1-\alpha}{2-\alpha}$ there exists $C_{0}>0$ such that
\begin{equation}\label{PSWKVS3}
\inf_{\substack{\|((u_{j})_{j=0,\ldots,N},(v_{j})_{j=0,\ldots,N})\|_{\mathcal{H}}=1
\\
\lambda\in\R}}\lambda^{\gamma}\left\|i\lambda\left((u_{j})_{j=0,\ldots,N},(v_{j})_{j=0,\ldots,N}\right)-\mathcal{A}\left((u_{j})_{j=0,\ldots,N},(v_{j})_{j=0,\ldots,N}\right)\right\|_{\mathcal{H}}\geq C_{0}.
\end{equation}
Suppose that \eqref{PSWKVS3} fails then there exist a sequence of real numbers $\lambda_{n}$ and a sequence of functions $(u_{n},v_{n})_{n\in\N}=\big((u_{0,n},\ldots,u_{N,n}),(v_{0,n},\ldots,v_{N,n})\big)_{n\in\N}\subset\mathcal{D}(\mathcal{A})$ such that
\begin{eqnarray}
&\lambda_{n}\,\longrightarrow\,\infty\;\text{ as }\;n\,\longrightarrow\,\infty,&
\\
&\big\|(u_{n},v_{n})\big\|=1,&\label{PSWKVS4}
\\
&\lambda_{n}^{\gamma}\left\|i\lambda_{n}(u_{n},v_{n})-\mathcal{A}(u_{n},v_{n})\right\|_{\mathcal{H}}=o(1).&\label{PSWKVS9}
\end{eqnarray}
Since, we have
\begin{equation}\label{PSWKVS10}
\lambda_{n}^{\gamma}\re\langle i\lambda (u_{n},v_{n})-\mathcal{A}(u_{n},v_{n}),(u_{n},v_{n})\rangle=\lambda_{n}^{\gamma}\sum_{j=1}^{n}\int_{0}^{\ell_{j}}d_{j}|v_{j,n}'|^{2}\,\ud x
\end{equation}
then using \eqref{PSWKVS4} and \eqref{PSWKVS9} we obtain
\begin{equation}\label{PSWKVS11}
\sum_{j=1}^{n}\|d_{j}^{\frac{1}{2}}v_{j,n}'\|_{L^{2}(0,\ell_{j})}=o(\lambda_{n}^{-\frac{\gamma}{2}}).
\end{equation}
Following to \eqref{PSWKVS9} we have
\begin{align}
\lambda_{n}^{\gamma}(i\lambda_{n} u_{j,n}-v_{j,n})&=f_{j,n}\,\longrightarrow\,0\quad\text{in } H^{1}(0,\ell_{j}),\;j=0,\ldots,N,\label{PSWKVS5}
\\
\lambda_{n}^{\gamma}(i\lambda_{n} v_{0,n}-u_{0,n}'')&=g_{0,n}\,\longrightarrow\,0\quad\text{in } L^{2}(0,\ell_{0}),\label{PSWKVS6}
\\
\lambda_{n}^{\gamma}(i\lambda_{n} v_{j,n}-T_{j,n}')&=g_{j,n}\,\longrightarrow\,0\quad\text{in } L^{2}(0,\ell_{j}),\;j=1,\ldots,N,\label{PSWKVS7}
\end{align}
with the transmission conditions
\begin{align}
u_{0,n}(0)=\dots=u_{N,n}(0),\label{PSWKVS57}
\\
u_{0,n}'(0)+\sum_{j=1}^{N}T_{j,n}(0)=0,\label{PSWKVS58}
\end{align}
where for $j=1,\ldots,N$ we have denoted by
\begin{equation}\label{PSWKVS8}
T_{j,n}=u_{j,n}'+d_{j}v_{j,n}'=(1+i\lambda_{n}d_{j})u_{j,n}'-\lambda_{n}^{-\gamma}d_{j}f_{j,n}'.
\end{equation}
By \eqref{PSWKVS11} and \eqref{PSWKVS5} we find
\begin{equation}\label{PSWKVS12}
\sum_{j=1}^{n}\|d_{j}^{\frac{1}{2}}u_{j,n}'\|_{L^{2}(0,\ell_{j})}=o(\lambda_{n}^{-\frac{\gamma}{2}-1}).
\end{equation}
One multiplies \eqref{PSWKVS6} by $(x-\ell_{0})\overline{u}_{0,n}'$ integrating by parts over the interval $(0,\ell_{0})$ and use \eqref{PSWKVS4} and \eqref{PSWKVS5} we get
\begin{equation}\label{PSWKVS13}
\int_{0}^{\ell_{0}}\left(|u_{0,n}'|^{2}+|v_{0,n}|^{2}\right)\,\ud x-\ell_{0}\left(|u_{0,n}'(0)|^{2}+|v_{0,n}(0)|^{2}\right)=o(1).
\end{equation}
We multiply \eqref{PSWKVS7} by $v_{j,n}$ for $j=1,\ldots,N$ and \eqref{PSWKVS6} by $v_{0,n}$ then integrating over $(0,\ell_{j})$  for $j=0,\ldots,N$ and summing up to get
\begin{equation}\label{PSWKVS14}
i\lambda_{n}^{\gamma+1}\sum_{j=0}^{N}\|v_{j,n}\|_{L^{2}(0,\ell_{j})}^{2}+\lambda_{n}^{\gamma}\sum_{j=0}^{N}\langle u_{j,n}',v_{j,n}'\rangle_{L^{2}(0,\ell_{j})}+\lambda_{n}^{\gamma}\sum_{j=1}^{N}\|d_{j}^{\frac{1}{2}}v_{j,n}'\|_{L^{2}(0,\ell_{j})}=o(1).
\end{equation}
We take the inner product of \eqref{PSWKVS5} with $u_{j,n}$ in $H^{1}(0,\ell_{j})$ for $j=0,\ldots,N$ and summing up,
\begin{equation}\label{PSWKVS15}
i\lambda_{n}^{\gamma+1}\sum_{j=0}^{N}\|u_{j}'\|_{L^{2}(0,\ell_{j})}^{2}-\lambda_{n}^{\gamma}\sum_{j=0}^{N}\langle v_{j,n}',u_{j,n}'\rangle_{L^{2}(0,\ell_{j})}=o(1).
\end{equation}
Adding \eqref{PSWKVS14} and \eqref{PSWKVS15} and taking the imaginary part of the equality then by \eqref{PSWKVS11} we arrive at
\begin{equation}\label{PSWKVS17}
\sum_{j=0}^{N}\left(\|u_{j,n}'\|_{L^{2}(0,\ell_{j})}-\|v_{j,n}\|_{L^{2}(0,\ell_{j})}\right)=o(1).
\end{equation}
At this stage we recall the following Hardy type inequalities
\begin{lem}\cite[Theorem 3.8]{MV}\label{PSWKVS16}
Let $L>0$ and $a:[0,L]\rightarrow\R_{+}$ be such that $a\in\mathcal{C}([0,L])\cap\mathcal{C}^{1}((0,L])$ and satisfying
$$
\lim_{x\rightarrow+\infty}\frac{x a'(x)}{a(x)}=\eta\in[0,1).
$$
Then there exists $C(\eta,L)>0$ such that for all locally continuous function $z$ on $[0,L]$ satisfying
$$
z(0)=0\quad\text{ and }\quad\int_{0}^{L}a(x)|z'(x)|^{2}\,\ud x<\infty
$$
the following inequality holds
$$
\int_{0}^{L}\frac{a(x)}{x^{2}}|z(x)|^{2}\,\ud x\leq C(\eta,L)\int_{0}^{L}a(x)|z'(x)|^{2}\,\ud x.
$$
\end{lem}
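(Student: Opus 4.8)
The plan is to establish this weighted Hardy inequality by an integration-by-parts argument that exploits the sign information hidden in the hypothesis $\lim_{x\to0^{+}}\frac{xa'(x)}{a(x)}=\eta\in[0,1)$. First I would record two consequences of this hypothesis at the origin. Since the logarithmic derivative $a'/a$ is asymptotically $\eta/x$, the weight $a$ is comparable (up to slowly varying factors) to $x^{\eta}$ near $0$; because $\eta<1$ this makes $1/a$ integrable on $(0,L]$. The elementary Cauchy--Schwarz bound $|z(x)|^{2}=\big|\int_{0}^{x}z'\big|^{2}\le\big(\int_{0}^{x}a^{-1}\big)\big(\int_{0}^{x}a|z'|^{2}\big)$ then yields two facts I will use repeatedly: that $\sup_{[0,L]}|z|^{2}\le\big(\int_{0}^{L}a^{-1}\big)\int_{0}^{L}a|z'|^{2}$, and that $\frac{a(x)}{x}|z(x)|^{2}\to0$ as $x\to0^{+}$ (since $\frac{a(x)}{x}\int_{0}^{x}a^{-1}$ stays bounded while $\int_{0}^{x}a|z'|^{2}\to0$). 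This last limit is exactly what is needed to annihilate the boundary term below.

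The core of the argument is the identity $\big(-\frac{a(x)}{x}\big)'=\frac{a(x)}{x^{2}}\big(1-\frac{xa'(x)}{a(x)}\big)$. Fix $\varepsilon\in(0,1-\eta)$ and choose $\delta>0$ so small that $\frac{xa'(x)}{a(x)}\le\eta+\varepsilon$ for all $x\in(0,\delta)$; then $1-\frac{xa'}{a}\ge1-\eta-\varepsilon>0$ on $(0,\delta)$. On this interval I would multiply the identity by $|z|^{2}$ and integrate, first over $[\rho,\delta]$ and then letting $\rho\to0^{+}$ to keep every quantity finite. Integrating by parts, the boundary term at $x=\delta$ equals $-\frac{a(\delta)}{\delta}|z(\delta)|^{2}\le0$ and the one at the origin vanishes by the first paragraph, leaving $(1-\eta-\varepsilon)\int_{0}^{\delta}\frac{a}{x^{2}}|z|^{2}\,\ud x\le2\int_{0}^{\delta}\frac{a}{x}|z|\,|z'|\,\ud x$. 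Writing $\frac{a}{x}|z|\,|z'|=\frac{\sqrt{a}\,|z|}{x}\cdot\sqrt{a}\,|z'|$ and applying Cauchy--Schwarz, then dividing through by $\big(\int_{0}^{\delta}\frac{a}{x^{2}}|z|^{2}\big)^{1/2}$, gives $\int_{0}^{\delta}\frac{a}{x^{2}}|z|^{2}\,\ud x\le\frac{4}{(1-\eta-\varepsilon)^{2}}\int_{0}^{L}a|z'|^{2}\,\ud x$.

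It remains to handle $[\delta,L]$, where no sign condition is available but the singular weight $1/x^{2}$ is harmless: on the compact interval $[\delta,L]$ the continuous positive function $a$ and the factor $1/x^{2}$ are both bounded, so $\int_{\delta}^{L}\frac{a}{x^{2}}|z|^{2}\,\ud x\le\frac{(L-\delta)\|a\|_{L^{\infty}}}{\delta^{2}}\sup_{[0,L]}|z|^{2}$, and the supremum is controlled by $\int_{0}^{L}a|z'|^{2}$ via the first paragraph. Adding the two contributions produces the inequality with a constant $C(\eta,L)$ depending on $\eta$, $L$, the threshold $\delta$, $\|a\|_{L^{\infty}}$, and $\int_{0}^{L}a^{-1}$. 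I expect the main obstacle to be the concurrent management of the origin: one must simultaneously guarantee integrability of $1/a$, the vanishing of the boundary term $\frac{a}{x}|z|^{2}$, and the strict positivity of $1-\frac{xa'}{a}$. All three rely on $\eta<1$ and degenerate as $\eta\uparrow1$, which is consistent with the inequality itself breaking down in that limit; the truncation at $\rho$ and the passage $\rho\to0^{+}$ is the technical point that makes the formal integration by parts rigorous.
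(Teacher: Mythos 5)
The paper contains no proof of this lemma: it is imported wholesale from \cite[Theorem 3.8]{MV} (with a misprint you silently and correctly fixed --- the limit is taken as $x\to 0^{+}$, not $x\to+\infty$). So the only comparison available is with the classical argument, and yours is exactly that: the integration-by-parts proof of weighted Hardy inequalities, resting on the identity $\bigl(-\frac{a(x)}{x}\bigr)'=\frac{a(x)}{x^{2}}\bigl(1-\frac{xa'(x)}{a(x)}\bigr)$, truncation at $\rho$, the favourable sign of the boundary term at $\delta$, Cauchy--Schwarz absorption, and the crude bound on $[\delta,L]$ via $\sup_{[0,L]}|z|^{2}\le\bigl(\int_{0}^{L}a^{-1}\bigr)\int_{0}^{L}a|z'|^{2}$. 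The skeleton is correct and complete, including the device of working on $[\rho,\delta]$ before letting $\rho\to0^{+}$, which is what legitimises dividing by $\bigl(\int\frac{a}{x^{2}}|z|^{2}\bigr)^{1/2}$ without an a priori finiteness assumption.

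Two points need repair or honesty. First, your justification of the boundedness of $h(x)=\frac{a(x)}{x}\int_{0}^{x}\frac{\ud s}{a(s)}$ is insufficient as written: the power comparison that the limit hypothesis actually yields, namely $a(\delta)(x/\delta)^{\eta+\varepsilon}\le a(x)\le a(\delta)(x/\delta)^{\eta-\varepsilon}$ on $(0,\delta)$, gives only $h(x)=O\bigl((\delta/x)^{2\varepsilon}\bigr)$, which blows up as $x\to0^{+}$. The claim is nevertheless true: apply L'H\^opital to $\int_{0}^{x}a^{-1}$ over $x/a(x)$ (both tend to $0$); the quotient of derivatives is $\bigl(1-\frac{xa'(x)}{a(x)}\bigr)^{-1}\to\frac{1}{1-\eta}$, legitimate since $1-\frac{xa'}{a}\ge\frac{1-\eta}{2}>0$ near $0$. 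With that line added, the vanishing of the boundary term $\frac{a(\rho)}{\rho}|z(\rho)|^{2}$, and hence your first two paragraphs, stand. Second, the constant: yours depends on $a$ (through $\delta$, $\|a\|_{\infty}$ and $\int_{0}^{L}a^{-1}$), not only on $(\eta,L)$ as the statement claims --- and under the bare limit hypothesis this is unavoidable, since a weight equal to $1$ near $0$ that dips to a level $m$ on an interior plateau, tested against a ramp supported on the plateau, forces the best constant to blow up as $m\to0$. The uniform constant $C(\eta,L)$ corresponds to the pointwise hypothesis $xa'(x)\le\eta\,a(x)$ on all of $(0,L]$, under which your proof simplifies and sharpens: then $a(x)/a(s)\le(x/s)^{\eta}$ for $s\le x$ gives $h\le\frac{1}{1-\eta}$ directly, the integration by parts runs on all of $(0,L)$ with a nonpositive boundary term at $L$, and one obtains $C=4/(1-\eta)^{2}$. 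You also implicitly assume $a>0$ on $(0,L]$ (needed for $\int_{0}^{L}a^{-1}<\infty$), which is part of the intended setting. None of this affects the paper's use of the lemma, where it is invoked for the fixed coefficients $d_{j}$ satisfying \eqref{IWKVS2}--\eqref{IWKVS3}, so a constant depending on $a$ would suffice there.
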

\begin{lem}\cite[Lemma 2.2]{LZ}\label{PSWKVS18}
Let $L>0$ and $\rho_{1},\,\rho_{2}>0$ be two weight functions defied on $(0,L)$. Then the following conditions are equivalent:
\begin{equation}\label{PSWKVS19}
\int_{0}^{L}\rho_{1}(x)|Tf(x)|^{2}\,\ud x\leq C\int_{0}^{L}\rho_{2}(x)|f(x)|^{2}\,\ud x,
\end{equation}
and
$$
K=\sup_{x\in(0,L)}\left(\int_{0}^{L-x}\rho_{1}(x)\,\ud x\right)\left(\int_{L-x}^{L}\left[\rho_{2}(x)\right]^{-1}\,\ud x\right)<\infty
$$
where $\ds Tf(x)=\int_{0}^{x}f(s)\,\ud x$. Moreover, the best constant $C$ in \eqref{PSWKVS19} satisfies $K\leq C\leq 2K$.
\end{lem}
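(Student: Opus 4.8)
The plan is to read Lemma~\ref{PSWKVS18} as the weighted one-dimensional Hardy inequality for the averaging operator $T$, whose solvability is governed by a Muckenhoupt-type balance between the two weights, and to prove the equivalence by establishing the two implications separately while keeping careful track of constants. After the harmless change of variable $r=L-x$ (which also removes the clash of notation in the displayed expression for $K$), it is convenient to introduce the two monotone profiles
$$W(x)=\int_{0}^{x}\rho_{2}(s)^{-1}\,\ud s,\qquad U(x)=\int_{x}^{L}\rho_{1}(s)\,\ud s,$$
so that $W'=\rho_{2}^{-1}$, $U'=-\rho_{1}$, and $K=\sup_{x\in(0,L)}U(x)W(x)$. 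Since replacing $f$ by $|f|$ only enlarges $|Tf|$ pointwise while leaving the right-hand side of \eqref{PSWKVS19} unchanged, I may assume $f\ge 0$ throughout.

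For the necessity (the inequality \eqref{PSWKVS19} forces $K\le C$), I would test \eqref{PSWKVS19} against the explicit competitor $f=\rho_{2}^{-1}\mathbf{1}_{[0,r]}$ for a fixed $r\in(0,L)$. For $x\ge r$ one has $Tf(x)=W(r)$, so the left-hand side is at least $W(r)^{2}\,U(r)$, while the right-hand side equals $C\int_{0}^{r}\rho_{2}^{-1}=C\,W(r)$. Dividing by $W(r)$ gives $U(r)W(r)\le C$, and taking the supremum over $r$ yields $K\le C$ (a routine truncation handles the degenerate case $W(r)=+\infty$).

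For the sufficiency (finiteness of $K$ implies \eqref{PSWKVS19}), the decisive device is to smuggle the auxiliary weight $W^{1/2}$ into $Tf$ before applying Cauchy--Schwarz:
$$\Big(\int_{0}^{x}f\Big)^{2}\le\Big(\int_{0}^{x}f(s)^{2}\rho_{2}(s)W(s)^{1/2}\,\ud s\Big)\Big(\int_{0}^{x}\rho_{2}(s)^{-1}W(s)^{-1/2}\,\ud s\Big).$$
Because $W'=\rho_{2}^{-1}$ and $W(0)=0$, the second factor integrates exactly to $2W(x)^{1/2}$. Multiplying by $\rho_{1}(x)$, integrating over $(0,L)$, and exchanging the order of integration by Fubini reduces the whole estimate to controlling $\int_{s}^{L}\rho_{1}(x)W(x)^{1/2}\,\ud x$; here the Muckenhoupt bound $W(x)\le K/U(x)$ together with $\rho_{1}=-U'$ gives $\int_{s}^{L}\rho_{1}W^{1/2}\le 2K^{1/2}U(s)^{1/2}$, after which the pointwise inequality $U(s)W(s)\le K$ collapses everything to $C\int_{0}^{L}\rho_{2}\,f^{2}$.

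The genuinely delicate point is the constant. The Cauchy--Schwarz/Fubini scheme above, even after optimizing the exponent $a$ of the inserted weight $W^{a}$ over $a\in(0,\tfrac12)$, yields only $C\le 4K$; recovering the factor $2$ claimed in $C\le 2K$ is where the real work lies, and it requires replacing the lossy Cauchy--Schwarz splitting by a sharper, non-splitting estimate that exploits the Hilbertian exponent $p=2$ (for instance an integration-by-parts identity for $\int_{0}^{L}\rho_{1}(Tf)^{2}$). Combining this sharp upper bound with the elementary lower bound $K\le C$ from the test-function step establishes the two-sided estimate $K\le C\le 2K$ and hence the asserted equivalence.
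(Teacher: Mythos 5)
The paper offers no proof of this lemma at all --- it is quoted from \cite[Lemma 2.2]{LZ}, which is itself Muckenhoupt's classical weighted Hardy inequality for $p=2$ --- so there is no internal argument to compare against. Your two implications (testing \eqref{PSWKVS19} with $f=\rho_{2}^{-1}\mathbf{1}_{[0,r]}$ for the necessity, and Cauchy--Schwarz with the inserted weight $W^{1/2}$ followed by Fubini for the sufficiency) are exactly the standard Muckenhoupt argument, and you execute them correctly. You were also right to read $K$ as $\sup_{x}\bigl(\int_{x}^{L}\rho_{1}\bigr)\bigl(\int_{0}^{x}\rho_{2}^{-1}\bigr)$: the displayed $K$ in the statement is garbled (beyond the variable clash, its literal form pairs $\int_{0}^{r}\rho_{1}$ with $\int_{r}^{L}\rho_{2}^{-1}$, which is the condition for the adjoint operator $f\mapsto\int_{x}^{L}f\,\ud s$ and would even be infinite in the paper's own application \eqref{PSWKVS20}, where $\rho_{1}=d_{j}(x)x^{2\beta-2}$ is non-integrable at $0$). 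Your silent correction is the intended reading.

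The genuine gap is your final paragraph. Your scheme yields $K\leq C\leq 4K$, and you then assert, without an argument, that a ``sharper, non-splitting estimate'' recovers $C\leq 2K$. No such estimate exists: for the inequality in the squared form \eqref{PSWKVS19}, the bound $C\leq 2K$ is \emph{false}. Take $\rho_{1}(x)=x^{-2}$ and $\rho_{2}\equiv 1$ on $(0,L)$; then $K=\sup_{x}\bigl(\tfrac{1}{x}-\tfrac{1}{L}\bigr)x=1$, while $f_{\varepsilon}(x)=x^{\varepsilon-1/2}$ gives $\int_{0}^{L}\rho_{1}|Tf_{\varepsilon}|^{2}\,\ud x\Big/\int_{0}^{L}f_{\varepsilon}^{2}\,\ud x=(\varepsilon+\tfrac{1}{2})^{-2}\rightarrow 4$ as $\varepsilon\rightarrow 0^{+}$, so the best constant is $4=4K$ (Hardy's classical constant). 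The factor $2$ in Muckenhoupt's theorem pertains to the \emph{norm} form: if $\bigl(\int\rho_{1}|Tf|^{2}\bigr)^{1/2}\leq C_{N}\bigl(\int\rho_{2}|f|^{2}\bigr)^{1/2}$, then $B\leq C_{N}\leq 2B$ with $B=K^{1/2}$, which upon squaring is precisely your $K\leq C\leq 4K$. So the sentence ``$K\leq C\leq 2K$'' is a misquotation already present in the cited statement, not a deficiency of your method: your proof is complete once the final claim is replaced by $K\leq C\leq 4K$, and nothing in the paper uses more than the equivalence together with finiteness of the constant, e.g.\ in \eqref{PSWKVS20}.
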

Let $\beta$ such that $\ds\frac{1-\alpha_{j}}{2}<\beta<1$ for all $j=1,\ldots,N$ and $\delta_{j}$ are positive numbers that will be specified later. Following to Lemmas \ref{PSWKVS16} and \ref{PSWKVS18} and assumptions \eqref{IWKVS2} and \eqref{IWKVS3} then for $n$ large enough
\begin{align}\label{PSWKVS20}
\|v_{j,n}\|_{L^{2}\left(\left[\frac{\lambda_{n}^{-\delta_{j}}}{2},\lambda_{n}^{-\delta_{j}}\right]\right)}&\leq\max_{x\in\left[\frac{\lambda_{n}^{-\delta_{j}}}{2},\lambda_{n}^{-\delta_{j}}\right]}\left\{\frac{x^{1-\beta}}{d_{j}(x)^{\frac{1}{2}}}\right\}\left\|\frac{d_{j}^{\frac{1}{2}}}{x}(x^{\beta}v_{j,n})\right\|_{L^{2}\left(\left[\frac{\lambda_{n}^{-\delta_{j}}}{2},\lambda_{n}^{-\delta_{j}}\right]\right)}\nonumber
\\
&\leq C\lambda_{n}^{-\delta_{j}(1-\beta-\frac{\alpha_{j}}{2})}\left(\left\|d_{j}^{\frac{1}{2}}x^{\beta}v_{j,n}'\right\|_{L^{2}([0,\ell_{j}])}+\beta\left\|d_{j}^{\frac{1}{2}}x^{\beta-1}v_{j,n}\right\|_{L^{2}([0,\ell_{j}])}\right)\nonumber
\\
&\leq C\lambda_{n}^{-\delta_{j}(1-\beta-\frac{\alpha_{j}}{2})}\left\|d_{j}^{\frac{1}{2}}v_{j,n}'\right\|_{L^{2}([0,\ell_{j}])}.
\end{align}
Performing the following calculation and uses \eqref{PSWKVS20} one finds
\begin{multline}\label{PSWKVS21}                           
\min_{x\in\left[\frac{\lambda_{n}^{-\delta_{j}}}{2},\lambda_{n}^{-\delta_{j}}\right]}\left\{|v_{j,n}(x)|+|T_{j,n}(x)|\right\}\leq\sqrt{2}\lambda_{n}^{\frac{\delta_{j}}{2}}\left(\|v_{j,n}\|_{L^{2}\left(\left[\frac{\lambda_{n}^{-\delta_{j}}}{2},\lambda_{n}^{-\delta_{j}}\right]\right)}+\|T_{j,n}\|_{L^{2}\left(\left[\frac{\lambda_{n}^{-\delta_{j}}}{2},\lambda_{n}^{-\delta_{j}}\right]\right)}\right)
\\
\leq\sqrt{2}\lambda_{n}^{\frac{\delta_{j}}{2}}\left(\|v_{j,n}\|_{L^{2}\left(\left[\frac{\lambda_{n}^{-\delta_{j}}}{2},\lambda_{n}^{-\delta_{j}}\right]\right)}+\|u_{j,n}'\|_{L^{2}\left(\left[\frac{\lambda_{n}^{-\delta_{j}}}{2},\lambda_{n}^{-\delta_{j}}\right]\right)}+\|d_{j}v_{j,n}'\|_{L^{2}\left(\left[\frac{\lambda_{n}^{-\delta_{j}}}{2},\lambda_{n}^{-\delta_{j}}\right]\right)}\right)
\\
\leq C\lambda_{n}^{\frac{\delta_{j}}{2}}\Bigg(\lambda_{n}^{-\delta_{j}(1-\beta-\frac{\alpha_{j}}{2})}\left\|d_{j}^{\frac{1}{2}}v_{j,n}'\right\|_{L^{2}([0,\ell_{j}])}+\max_{x\in\left[\frac{\lambda_{n}^{-\delta_{j}}}{2},\lambda_{n}^{-\delta_{j}}\right]}\{d_{j}(x)^{-\frac{1}{2}}\}\|d_{j}^{\frac{1}{2}}u_{j,n}'\|_{L^{2}([0,\ell_{j}])}
\\
+\max_{x\in\left[\frac{\lambda_{n}^{-\delta_{j}}}{2},\lambda_{n}^{-\delta_{j}}\right]}\{d_{j}(x)^{\frac{1}{2}}\}\|d_{j}^{\frac{1}{2}}v_{j,n}'\|_{L^{2}([0,\ell_{j}])}\Bigg)
\\
\leq C\lambda_{n}^{\frac{\delta_{j}}{2}}\left(\lambda_{n}^{-\delta_{j}(1-\beta-\frac{\alpha_{j}}{2})}\left\|d_{j}^{\frac{1}{2}}v_{j,n}'\right\|_{L^{2}([0,\ell_{j}])}+\lambda_{n}^{\frac{\delta_{j}\alpha_{j}}{2}}\|d_{j}^{\frac{1}{2}}u_{j,n}'\|_{L^{2}([0,\ell_{j}])}+\lambda_{n}^{-\frac{\delta_{j}\alpha_{j}}{2}}\|d_{j}^{\frac{1}{2}}v_{j,n}'\|_{L^{2}([0,\ell_{j}])}\right).
\end{multline}
Inserting \eqref{PSWKVS11}, \eqref{PSWKVS12} into \eqref{PSWKVS21}, then we obtain
$$
\min_{x\in\left[\frac{\lambda_{n}^{-\delta_{j}}}{2},\lambda_{n}^{-\delta_{j}}\right]}\left\{|v_{j,n}(x)|+|T_{j,n}(x)|\right\}=\left(\lambda_{n}^{-\delta_{j}(\frac{1-\alpha_{j}}{2}-\beta)-\frac{\gamma}{2}}+\lambda_{n}^{\frac{\delta_{j}}{2}(\alpha_{j}+1)-\frac{\gamma}{2}-1}+\lambda_{n}^{\frac{\delta_{j}}{2}(1-\alpha_{j})-\frac{\gamma}{2}}\right)o(1),
$$
for every $\beta$ such that $\ds\frac{1-\alpha_{j}}{2}<\beta<1$ for every $j=1,\ldots,N$. Then we can choose $\beta$ such that $\ds-\delta_{j}(\frac{1-\alpha_{j}}{2}-\beta)-\frac{\gamma}{2}<0$ for every $j=1,\ldots,N$. Hence, as long as we choose $\delta_{j}>0$ and $\gamma>0$ such that
\begin{equation}\label{PSWKVS22}
\frac{\delta_{j}}{2}(\alpha_{j}+1)-\frac{\gamma}{2}-1\leq0\qquad\text{and}\qquad\frac{\delta_{j}}{2}(1-\alpha_{j})-\frac{\gamma}{2}\leq0\quad\forall\,j=1,\ldots,N.
\end{equation}
the following estimate holds
\begin{equation}\label{PSWKVS23}
\min_{x\in\left[\frac{\lambda_{n}^{-\delta_{j}}}{2},\lambda_{n}^{-\delta_{j}}\right]}\left\{|v_{j,n}(x)|+|T_{j,n}(x)|\right\}=o(1).
\end{equation}
And consequently, we are able to find $\xi_{j,n}\in\left[\frac{\lambda_{n}^{-\delta_{j}}}{2},\lambda_{n}^{-\delta_{j}}\right]$ such that
\begin{equation}\label{PSWKVS24}
|v_{j,n}(\xi_{j,n})|=o(1)\qquad\text{and}\qquad |T_{j,n}(\xi_{j,n})|=o(1).
\end{equation}

We set
$$
z_{j,n}^{\pm}(x)=\frac{i\lambda_{n}}{\sqrt{1+\lambda_{n}d_{j}(x)}}\int_{x}^{\xi_{j,n}}v_{j,n}(\tau)\,\ud\tau\pm v_{j,n}(x),\qquad\forall\,x\in[0,\xi_{j,n}].
$$
Then we have
\begin{multline}\label{PSWKVS25}
z_{j,n}^{\pm\;\prime}(x)=-\frac{i\lambda_{n}d_{j}'(x)}{4(1+\lambda_{n}d_{j}(x))}(z_{j,n}^{+}(x)+z_{j,n}^{-}(x))\mp\frac{i\lambda_{n}}{\sqrt{1+\lambda_{n}d_{j}(x)}}z_{j,n}^{\pm}(x)
\\
\mp\frac{\lambda_{n}^{2}}{1+\lambda_{n}d_{j}(x)}\int_{x}^{\xi_{j,n}}v_{j,n}(\tau)\,\ud\tau\pm v_{j,n}'(x).
\end{multline}
Combining \eqref{PSWKVS5} and \eqref{PSWKVS8} we have
\begin{align}\label{PSWKVS26}
\pm v_{j,n}'(x)&=\pm i\lambda_{n} u_{j,n}'(x)\mp\lambda_{n}^{-\gamma}f_{j,n}'(x)=\pm\frac{i\lambda_{n}}{1+i\lambda_{n}d_{j}(x)}T_{j,n}(x)\pm\frac{i\lambda_{n}^{1-\gamma}d_{j}(x)}{1+i\lambda_{n}d_{j}}f_{j,n}'(x)\mp\lambda_{n}^{-\gamma}f_{j,n}'(x)\nonumber
\\
&=\pm\frac{i\lambda_{n}}{1+i\lambda_{n}d_{j}}T_{j,n}(x)\mp\frac{\lambda_{n}^{-\gamma}}{1+i\lambda_{n}d_{j}(x)}f_{j,n}'(x).
\end{align}
Integrating \eqref{PSWKVS7} over $(x,\xi_{j,n})$ and multiplying by $\ds\pm\frac{i\lambda_{n}}{1+i\lambda_{n}d_{j}(x)}$ then we get
\begin{equation}\label{PSWKVS27}
\mp\frac{\lambda_{n}^{2}}{1+i\lambda_{n}d_{j}(x)}\int_{x}^{\xi_{j,n}}v_{j,n}(\tau)\,\ud\tau=\pm\frac{i\lambda_{n}}{1+i\lambda_{n}d_{j}(x)}(T_{j,n}(\xi_{j,n})-T_{j,n}(x))\pm\frac{i\lambda_{n}^{1-\gamma}}{1+i\lambda_{n}d_{j}(x)}\int_{x}^{\xi_{j,n}}g_{j,n}(\tau)\,\ud\tau.
\end{equation}
Inserting \eqref{PSWKVS26} and \eqref{PSWKVS27} into \eqref{PSWKVS25}, we find
\begin{equation}\label{PSWKVS28}
z_{j,n}^{\pm\;\prime}(x)=\mp\frac{i\lambda_{n}}{\sqrt{1+\lambda_{n}d_{j}(x)}}z_{j,n}^{\pm}(x)-\frac{i\lambda_{n}d_{j}'(x)}{4(1+\lambda_{n}d_{j}(x))}(z_{j,n}^{+}(x)+z_{j,n}^{-}(x))\pm\frac{i\lambda_{n}}{1+i\lambda_{n}d_{j}(x)}T_{j,n}(\xi_{j,n})\pm F_{j,n}(x)
\end{equation}
where 
$$
F_{j,n}(x)=-\frac{\lambda_{n}^{-\gamma}}{1+i\lambda_{n}d_{j}(x)}f_{j,n}'(x)+\frac{i\lambda_{n}^{1-\gamma}}{1+i\lambda_{n}d_{j}(x)}\int_{x}^{\xi_{j,n}}g_{j,n}(\tau)\,\ud\tau.
$$
Solving \eqref{PSWKVS28}, then for every $x\in[0,\xi_{j,n}]$, one gets
\begin{multline}\label{PSWKVS29}
z_{j,n}^{\pm}(x)=v_{j,n}(\xi_{j,n})e^{\mp(q_{j,n}(x)-q_{j,n}(\xi_{j,n}))}-\int_{\xi_{j,n}}^{x}e^{\mp(q_{j,n}(x)-q_{j,n}(s))}\frac{i\lambda_{n}d_{j}'(s)}{4(1+\lambda_{n}d_{j}(s))}(z_{j,n}^{+}(s)+z_{j,n}^{-}(s))\,\ud s
\\
\pm T_{j,n}(\xi_{j,n})\int_{\xi_{j,n}}^{x}e^{\mp(q_{j,n}(x)-q_{j,n}(s))}\frac{i\lambda_{n}}{1+i\lambda_{n}d_{j}(s)}\ud s\pm\int_{\xi_{j,n}}^{x}e^{\mp(q_{j,n}(x)-q_{j,n}(s))}F_{j,n}(s)\,\ud s
\end{multline}
where
$$
q_{j,n}(x)=i\lambda_{n}\int_{0}^{x}\frac{\ud s}{\sqrt{1+i\lambda_{n}d_{j}(s)}}.
$$

For all $x\in[0,\xi_{j,n}]$ we have
$$
q_{j,n}(x)=i\lambda_{n}\int_{0}^{x}\frac{e^{i\varphi_{j,n}(s)}}{(1+(\lambda_{j,n}d_{j}(s))^{2})^{\frac{1}{4}}}\,\ud s
$$
where
$$
\varphi_{j,n}(s)=-\frac{1}{2}\arg(1+i\lambda_{n}d_{j,n}(s)).
$$
Consequently,
$$
\re(q_{j,n}(x))=\pm\lambda_{n}\int_{0}^{x}\frac{\sin(\varphi_{j,n}(s))}{(1+(\lambda_{j,n}d_{j}(s))^{2})^{\frac{1}{4}}}\,\ud s.
$$
When $s\in[0,\xi_{j,n}]$ and from assumption we have
\begin{equation}\label{PSWKVS30}
\frac{1}{(1+(\lambda_{j,n}d_{j}(s))^{2})^{\frac{1}{4}}}=\left\{\begin{array}{ll}
O(1)&\text{if }\delta_{j}\alpha_{j}\geq 1
\\
O(\lambda_{n}^{\frac{\delta_{j}\alpha_{j}-1}{2}})&\text{if }\delta_{j}\alpha_{j}<1
\end{array}\right.
\end{equation}
and
\begin{align}\label{PSWKVS31}
|\sin(\varphi_{j,n}(s))|&=\sqrt{\frac{1}{2}-\frac{1}{2\sqrt{1+(\lambda_{n}d_{j}(s))^{2}}}}\nonumber
\\
&=\left\{\begin{array}{ll}
O(\lambda_{n}^{1-\delta_{j}\alpha_{j}})&\text{if }\delta_{j}\alpha_{j}>1
\\
O(1)&\text{if }\delta_{j}\alpha_{j}\leq1.
\end{array}\right.
\end{align}
With $0\leq x\leq s\leq\xi_{j,n}$, from \eqref{PSWKVS30} and \eqref{PSWKVS31} we see that
\begin{align*}
|\re(q_{j,n}(x)-q_{j,n}(s))|&\leq\lambda_{n}\int_{x}^{s}\frac{|\sin(\varphi_{j,n}(\tau))|}{(1+(\lambda_{n}d_{j}(\tau))^{2})^{\frac{1}{4}}}\ud\tau\nonumber
\\
&\leq\sup_{\tau\in(x,s)}\left\{\frac{|\sin(\varphi_{j,n}(\tau))|}{(1+(\lambda_{n}d_{j}(\tau))^{2})^{\frac{1}{4}}}\right\}\lambda_{n}(s-x)\nonumber
\\
&=\left\{\begin{array}{ll}
O(\lambda_{n}^{-(\alpha_{j}+1)\delta_{j}+2})=o(1)&\text{if }\delta_{j}\alpha_{j}>1
\\
O(\lambda_{n}^{1-\delta_{j}})=O(\lambda_{n}^{1-\frac{1}{\alpha_{j}}})=o(1)&\text{if }\delta_{j}\alpha_{j}=1
\\
O(\lambda_{n}^{\frac{\delta_{j}(\alpha_{j}-2)+1}{2}})&\text{if }\delta_{j}\alpha_{j}<1.
\end{array}\right.
\end{align*}
This implies that
\begin{align}\label{PSWKVS32}
|e^{\pm(q_{j,n}(x)-q_{j,n}(s))}|\leq 1
\end{align}
providing that $\delta_{j}>0$ satisfying \eqref{PSWKVS22} and
\begin{equation}\label{PSWKVS33}
\delta_{j}\geq\frac{1}{\alpha_{j}}\quad\text{or}\quad\delta_{j}\leq\frac{1}{2-\alpha_{j}}\qquad \forall\, j=1,\ldots,N.
\end{equation}
From \eqref{PSWKVS30} for every $x\in[0,\xi_{j,n}]$, we have
\begin{align}\label{PSWKVS34}
\int_{x}^{\xi_{j,n}}\frac{\ud s}{|1+i\lambda_{n}d_{j}(s)|}&=\left\{\begin{array}{ll}
O(1)(\xi_{j,n}-x)&\text{if }\delta_{j}\alpha_{j}\geq1
\\
O(\lambda_{n}^{\alpha_{j}\delta_{j}-1})(\xi_{j,n}-x)&\text{if }\delta_{j}\alpha_{j}<1
\end{array}\right.\nonumber
\\
&=\left\{\begin{array}{ll}
O(\lambda_{n}^{-\delta_{j}})&\text{if }\delta_{j}\alpha_{j}\geq1
\\
O(\lambda_{n}^{(\alpha_{j}-1)\delta_{j}-1})&\text{if }\delta_{j}\alpha_{j}<1
\end{array}\right.
\end{align}
and
\begin{align}\label{PSWKVS38}
\left(\int_{x}^{\xi_{j,n}}\frac{\ud s}{|1+i\lambda_{n}d_{j}(s)|^{2}}\right)^{\frac{1}{2}}=\left\{\begin{array}{ll}
O(\lambda_{n}^{-\frac{\delta_{j}}{2}})&\text{if }\delta_{j}\alpha_{j}\geq1
\\
O(\lambda_{n}^{(\alpha_{j}-\frac{1}{2})\delta_{j}-1})&\text{if }\delta_{j}\alpha_{j}<1.
\end{array}\right.
\end{align}
Therefore, when $\delta_{j}$ and $\gamma$ satisfy \eqref{PSWKVS22} and \eqref{PSWKVS33}, form \eqref{PSWKVS24}, \eqref{PSWKVS32} and \eqref{PSWKVS34} we obtain
\begin{align}\label{PSWKVS35}
|T_{j,n}(\xi_{j,n})|.\left|\int_{\xi_{j,n}}^{x}e^{\mp(q_{j,n}(x)-q_{j,n}(s))}\frac{i\lambda_{n}}{1+i\lambda_{n}d_{j}(s)}\ud s\right|&\leq|T_{j,n}(\xi_{j,n})|.\int_{x}^{\xi_{j,n}}\frac{\lambda_{n}}{|1+i\lambda_{n}d_{j}(s)|}\ud s\nonumber
\\
&=o(1)\quad\forall\,x\in[0,\xi_{j,n}].
\end{align}
Moreover, under the same conditions on $\delta_{j}$ and $\gamma$, due to \eqref{PSWKVS5}, \eqref{PSWKVS32} and \eqref{PSWKVS38} the Cauchy-Schwarz inequality leads to
\begin{align}\label{PSWKVS36}
\left|\int_{\xi_{j,n}}^{x}e^{\mp(q_{j,n}(x)-q_{j,n}(s))}\frac{\lambda_{n}^{-\gamma}}{1+i\lambda_{n}d_{j}(s)}f_{j,n}'(s)\,\ud s\right|&\leq\int_{x}^{\xi_{j,n}}\frac{\lambda_{n}^{-\gamma}}{|1+i\lambda_{n}d_{j}(s)|}|f_{j,n}'(s)|\,\ud s\nonumber
\\
&\leq\lambda_{n}^{-\gamma}\|f_{j,n}'\|_{L^{2}(0,\ell_{j})}\left(\int_{x}^{\xi_{j,n}}\frac{\,\ud s}{|1+i\lambda_{n}d_{j}(s)|^{2}}\right)^{\frac{1}{2}}\nonumber
\\
&=\left\{\begin{array}{ll}
o(\lambda_{n}^{-\frac{\delta_{j}}{2}-\gamma})&\text{if }\delta_{j}\alpha_{j}\geq 1
\\
o(\lambda_{n}^{\delta_{j}(\alpha_{j}-\frac{1}{2})-\gamma-1})&\text{if }\delta_{j}\alpha_{j}<1
\end{array}\right.\nonumber
\\
&=o(1)\quad\forall\,x\in[0,\xi_{j,n}],
\end{align}
and due to \eqref{PSWKVS7}, \eqref{PSWKVS32} and \eqref{PSWKVS34}, we have
\begin{align}\label{PSWKVS37}
\int_{\xi_{j,n}}^{x}&e^{\mp(q_{j,n}(x)-q_{j,n}(s))}\frac{i\lambda_{n}^{1-\gamma}}{1+i\lambda_{n}d_{j}(s)}\int_{s}^{\xi_{j,n}}g_{j,n}(\tau)\,\ud\tau\,\ud s\nonumber
\\
&=\left|\int_{x}^{\xi_{j,n}}\int_{x}^{\tau}e^{\mp(q_{j,n}(x)-q_{j,n}(s))}\frac{\lambda_{n}^{1-\gamma}}{1+i\lambda_{n}d_{j}(s)}g_{j,n}(\tau)\,\ud\tau\,\ud s\right|\nonumber
\\
&\leq\lambda_{n}^{1-\gamma}\int_{x}^{\xi_{j,n}}\int_{x}^{\tau}\frac{|g_{j,n}(\tau)|}{|1+i\lambda_{n}d_{j}(s)|}\,\ud\tau\,\ud s\nonumber
\\
&\leq\lambda_{n}^{1-\gamma}\int_{x}^{\xi_{j,n}}\frac{\ud s}{|1+i\lambda_{n}d_{j}(s)|}\int_{x}^{\xi_{j,n}}|g_{j,n}(\tau)|\,\ud\tau\nonumber
\\
&=\left\{\begin{array}{ll}
o(\lambda_{n}^{1-\gamma-\frac{3\delta_{j}}{2}})&\text{if }\delta_{j}\alpha_{j}\geq 1
\\
o(\lambda_{n}^{\delta_{j}(\alpha_{j}-\frac{3}{2})-\gamma})&\text{if }\delta_{j}\alpha_{j}<1
\end{array}\right.\nonumber
\\
&=o(1)\quad\forall\,x\in[0,\xi_{j,n}].
\end{align}
Combining \eqref{PSWKVS36} and \eqref{PSWKVS37} yields
\begin{equation}\label{PSWKVS39}
\left|\int_{\xi_{j,n}}^{x}e^{\mp(q_{j,n}(x)-q_{j,n}(s))}F_{j,n}(s)\,\ud s\right|=o(1)\quad\forall\,x\in[0,\xi_{j,n}].
\end{equation}
From assumption \eqref{IWKVS2} we have $d_{j}'(s)>0$ near $0$, then by Cauchy-Schwarz inequality and assumption \eqref{IWKVS3} we find
\begin{align}\label{PSWKVS40}
\int_{x}^{\xi_{j,n}}\frac{\lambda_{n}d_{j}'(s)}{|1+i\lambda_{n}d_{j}(s)|}\ud s&\leq\left(\int_{x}^{\xi_{j,n}}\frac{\lambda_{n}^{2}d_{j}'(s)}{1+(\lambda_{n}d_{j}(s))^{2}}\ud s\right)^{\frac{1}{2}}.\left(\int_{x}^{\xi_{j,n}}d_{j}'(s)\,\ud s\right)^{\frac{1}{2}}\nonumber
\\
&\leq\left(\arctan(\lambda_{n}^{2}d_{j}(\xi_{j,n}))-\arctan(\lambda_{n}^{2}d_{j}(x))\right)^{\frac{1}{2}}.\left(d_{j}(\xi_{j,n})-d_{j}(x)\right)^{\frac{1}{2}}\nonumber
\\
&=O(\lambda_{n}^{-\frac{\alpha_{j}\delta_{j}}{2}})\quad\forall\,x\in[0,\xi_{j,n}].
\end{align}
Inserting \eqref{PSWKVS24}, \eqref{PSWKVS32}, \eqref{PSWKVS35}, \eqref{PSWKVS39} and \eqref{PSWKVS40} into \eqref{PSWKVS29}, then we get
$$
|z_{j,n}^{\pm}(x)|\leq o(1)(m_{j,n}+1)\quad\forall\,x\in[0,\xi_{j,n}],
$$
where
$$
m_{j,n}=\max_{x\in[0,\xi_{j,n}]}\{|z_{j,n}^{+}(x)|+|z_{j,n}^{-}(x)|\},
$$
which leads to
\begin{equation}\label{PSWKVS41}
m_{j,n}=o(1).
\end{equation}

Since we can write
$$
v_{j,n}(x)=\frac{1}{2}(z_{j,n}^{+}(x)-z_{j,n}^{-}(x))
$$
then we follow from \eqref{PSWKVS41} that
\begin{equation}\label{PSWKVS42}
\|v_{j,n}\|_{L^{2}(0,\xi_{j,n})}=\frac{1}{2}\|z_{j,n}^{+}-z_{j,n}^{-}\|_{L^{2}(0,\xi_{j,n})}\leq\frac{m_{j,n}}{2}\sqrt{\xi_{j,n}}=o(\lambda_{n}^{-\frac{\delta_{j}}{2}})
\end{equation}
and
\begin{equation}\label{PSWKVS43}
|v_{j,n}(0)|=\frac{1}{2}|z_{j,n}^{+}(0)-z_{j,n}^{-}(0)|\leq \frac{m_{j,n}}{2}=o(1).
\end{equation}
Integrating \eqref{PSWKVS7} over $(0,\xi_{j,n})$,
\begin{equation}\label{PSWKVS44}
i\lambda_{n}\int_{0}^{\xi_{j,n}}v_{j,n}(s)\,\ud s-T_{j,n}(\xi_{j,n})+T_{j,n}(0)=\lambda_{n}^{-\gamma}\int_{0}^{\xi_{j,n}}g_{j,n}(s)\,\ud s.
\end{equation}
Due to \eqref{PSWKVS41} and the fact that $a(0)=0$, we have
\begin{equation}\label{PSWKVS45}
\left|i\lambda_{n}\int_{0}^{\xi_{j,n}}v_{j,n}\,\ud s\right|=\frac{1}{2}|z_{j,n}^{+}(0)-z_{j,n}^{-}(0)|=o(1).
\end{equation}
Substituting \eqref{PSWKVS7}, \eqref{PSWKVS23} and \eqref{PSWKVS45} into \eqref{PSWKVS44} yields
\begin{equation}\label{PSWKVS46}
|T_{j,n}(0)|=o(1).
\end{equation}

Substituting  \eqref{PSWKVS43}, \eqref{PSWKVS46} into \eqref{PSWKVS13}, by the transmission conditions \eqref{PSWKVS57} and \eqref{PSWKVS58} we conclude
\begin{equation}\label{PSWKVS47}
\int_{0}^{\ell_{0}}\left(|u_{0,n}'|^{2}+|v_{0,n}|^{2}\right)\,\ud x=o(1).
\end{equation}
For $j=1,\ldots,N$ we have
\begin{align}\label{PSWKVS48}
\|d_{j}^{\frac{1}{2}}u'_{j}\|_{L^{2}(0,\ell_{j})}&\geq\|d_{j}^{\frac{1}{2}}u'_{j}\|_{L^{2}(\xi_{j,n},\ell_{j})}\geq \min_{x\in(\xi_{j,n},\ell_{j})}\left(\sqrt{d_{j}(x)}\right)\|u_{j,n}'\|_{L^{2}(\xi_{j,n},\ell_{j})}\nonumber
\\
&\geq\sqrt{d_{j}(\xi_{j,n})}\|u_{j,n}'\|_{L^{2}(\xi_{j,n},\ell_{j})}\geq C\xi_{j,n}^{\frac{\alpha_{j}}{2}}\|u_{j,n}'\|_{L^{2}(\xi_{j,n},\ell_{j})}\geq C\lambda_{n}^{-\frac{\delta_{j}\alpha_{j}}{2}}\|u_{j,n}'\|_{L^{2}(\xi_{j,n},\ell_{j})}.
\end{align}
From \eqref{PSWKVS22} we have $\ds\frac{\alpha_{j}\delta_{j}}{2}\leq\frac{\gamma}{2}+1$ for every $j=1,\ldots,N$, then by combining \eqref{PSWKVS12} and \eqref{PSWKVS48} one gets
\begin{equation}\label{PSWKVS49}
\|u_{j,n}'\|_{L^{2}(\xi_{j,n},\ell_{j})}=o(\lambda_{n}^{\frac{\alpha_{j}\delta_{j}}{2}-\frac{\gamma}{2}-1}).
\end{equation}
Therefore by the trace formula
\begin{equation}\label{PSWKVS51}
|u_{j,n}(\xi_{j,n})|=o(\lambda_{n}^{\frac{\alpha_{j}\delta_{j}}{2}-\frac{\gamma}{2}-1}).
\end{equation}
By trace formula and \eqref{PSWKVS8} we have
\begin{equation}\label{PSWKVS54}
|u_{j,n}(0)|\leq C\|u_{j,n}'\|_{L^{2}(0,\ell_{j})}=O(\lambda_{n}^{\frac{\alpha_{j}\delta_{j}}{2}-\frac{\gamma}{2}-1}).
\end{equation}
From \eqref{PSWKVS8} and \eqref{PSWKVS5} one has
\begin{equation}\label{PSWKVS50}
\lambda_{n}\|u_{j,n}\|_{L^{2}(0,\ell_{j})}=O(1).
\end{equation}
Multiplying \eqref{PSWKVS7} by $\lambda_{n}^{-\gamma}\overline{u}_{j,n}$ and integrating over $(0,\xi_{j,n})$ then by integrating by parts we arrive
\begin{multline}\label{PSWKVS52}
i\lambda_{n}\langle v_{j,n},u_{j,n}\rangle_{L^{2}(0,\xi_{j,n})}+\langle d_{j,n}v_{j,n},u_{j,n}\rangle_{L^{2}(0,\xi_{j,n})}+\|u_{j,n}'\|_{L^{2}(0,\xi_{j,n})}
\\
+T_{j,n}(0)\overline{u}_{j,n}(0)-T_{j,n}(\xi_{j,n})\overline{u}_{j,n}(\xi_{j,n})=o(\lambda_{n}^{-\gamma}).
\end{multline}
Inserting \eqref{PSWKVS24}, \eqref{PSWKVS42}, \eqref{PSWKVS46}, \eqref{PSWKVS51}, \eqref{PSWKVS54} and \eqref{PSWKVS50} into \eqref{PSWKVS52} one finds
\begin{equation}\label{PSWKVS53}
\|u_{j,n}'\|_{L^{2}(0,\xi_{j,n})}=o(1),\quad\forall\,j=1,\ldots,N.
\end{equation}
So that, adding \eqref{PSWKVS49} and \eqref{PSWKVS53}, we obtain
\begin{equation}\label{PSWKVS55}
\|u_{j,n}'\|_{L^{2}(0,\ell_{j})}=o(1),\quad\forall\,j=1,\ldots,N.
\end{equation}
From \eqref{PSWKVS17}, \eqref{PSWKVS47} and \eqref{PSWKVS55} leads to
\begin{equation}\label{PSWKVS56}
\|v_{j}\|_{L^{2}(0,\ell_{j})}=o(1),\quad\forall\,j=1,\ldots,N.
\end{equation}
This conclude the prove since now we have proved that $\|(u_{n},v_{n})\|_{\mathcal{H}}=o(1)$ from \eqref{PSWKVS47}, \eqref{PSWKVS55} and \eqref{PSWKVS56} providing that \eqref{PSWKVS22} and \eqref{PSWKVS33} hold true.

Finally, noting that the best $\gamma$ and $\delta_{j}$, in term of maximization of $\gamma$, that satisfies \eqref{PSWKVS22} and \eqref{PSWKVS33} are where $\ds\gamma=\max\left\{\frac{1-\alpha_{j}}{2-\alpha_{j}},\;j=1,\ldots,N\right\}$ and $\ds\delta_{j}=\frac{1}{2-\alpha_{j}}$ for $j=1,\ldots,N$. This completes the proof.

\subsubsection*{Acknowledgments}
The author thanks professor Ka\"is Ammari for his advices, his suggestions and for all the discussions.

\end{document}